\numberwithin{equation}{section}
\theoremstyle{plain}
\newtheorem{thm}{Theorem}[section]
\newtheorem{lem}[thm]{Lemma}
\newtheorem{prop}[thm]{Proposition}
\newtheorem{cor}[thm]{Corollary}
\theoremstyle{definition}
\newtheorem*{defn*}{Definition}
\newcommand{\C}{\mathbb{C}}
\newcommand{\Q}{\mathbb{Q}}
\newcommand{\R}{\mathbb{R}}
\newcommand{\Z}{\mathbb{Z}}
\newcommand{\cU}{\mathcal{U}}
\newcommand{\cZ}{\mathcal{Z}}
\newcommand{\tM}{{\widetilde{M}}}
\newcommand{\spa}{\operatorname{span}}
\newcommand{\ot}{\otimes}
\newcommand{\ovt}{\mathbin{\overline{\otimes}}}
\newcommand{\Aut}{\operatorname{Aut}}
\newcommand{\Ad}{\operatorname{Ad}}
\newcommand{\odd}{{\operatorname{odd}}}
\newcommand{\Ind}{\operatorname{Ind}} 
\newcommand{\id}{\operatorname{id}}
\newcommand{\GL}{\operatorname{GL}}
\newcommand{\SL}{\operatorname{SL}}
\newcommand{\diag}{\mathrm{diag}}
\newcommand{\rC}{\operatorname{C}}
\newcommand{\PSL}{\operatorname{PSL}}
\begin{document}

\title{Infinite characters of type II on $\SL_n(\Z)$}
\author{R\'emi Boutonnet}
\address{Institut de Math\'ematiques de Bordeaux \\ CNRS \\ Universit\'e de Bordeaux \\ 33405 Talence \\ FRANCE}
\email{remi.boutonnet@math.u-bordeaux.fr}
\thanks{Research supported by ANR grant AODynG, 19-CE40-0008}

\maketitle

\begin{abstract}
We construct uncountably many infinite characters of type II for $\SL_n(\Z)$, $n \geq 2$. \end{abstract}

\section{Introduction}

Since the work of Bekka \cite{Be06} it is known that the special linear groups $\PSL_n(\Z)$, $n \geq 3$, have no characters but the obvious ones. Recall that a character on a group $\Gamma$ is a positive definite function $\phi: \Gamma \to \C$ which is conjugation invariant, normalized so that $\phi(e) = 1$ and extremal for these properties. 
Bekka's result states that every character on $\Gamma = \PSL_n(\Z)$, $n \geq 3$, is either the Dirac function $\delta_e$, or is equal to $1$ on a finite index subgroup $\Lambda < \Gamma$, in which case it factors through a character of the finite quotient $\Gamma/\Lambda$. This result was generalized for other higher rank semi-simple lattices by Peterson \cite{Pe14}, see also \cite{BH19, BBHP20} for other proofs and results in this direction.

A classical argument based on the GNS construction shows that, alternatively, a character is of the form $\phi = \tau \circ \pi$, where $\pi: \Gamma \to \cU(M)$ is a generating unitary representation into a von Neumann factor $M$ with a faithful normal finite trace $\tau$. Here generating means that $\pi(\Gamma)$ generates $M$ as a von Neumann algebra. Bekka's result can be rephrased by saying that the only generating unitary representation of $\PSL_n(\Z)$, $n \geq 3$, into a II$_1$-factor is the regular representation.

More recently, in an attempt to understand the unitary representations of these groups, he studied the generating representations into type I factors, i.e. the irreducible unitary representations of $\Gamma = \SL_n(\Z)$ (among other groups), \cite{Be18}. His objective was to understand if these groups had ``infinite characters'', i.e. if they admit irreducible representations $\pi: \Gamma \to \cU(H)$ such that the C*-algebra $C^*(\pi(\Gamma))$ contains non-zero trace class operators. In this case, the semi-finite trace on $B(H)$ gives rise to an infinite tracial weight on the universal C*-algebra $C^*(\Gamma)$, which is not purely infinite. This can be called an ``infinite character of type I''. In \cite{Be18}, Bekka constructs an infinite family of pairwise inequivalent such infinite characters for $\SL_n(\Z)$, $n \geq 3$, and for other groups like $\GL_n(\Q)$. This answered a question of Rosenberg \cite{Ro89}.
Motivated by his rigidity result for finite characters of type II, he asked if these groups can admit non-trivial infinite characters of type II, i.e. generating representations $\pi: \Gamma \to \cU(M)$, where $M$ is a factor of type II$_\infty$ such that $C^*(\pi(\Gamma))$ contains non-zero elements with finite trace in $M$, see \cite[Remark 5]{Be18}. Such a representation $\pi$ is said to be {\em traceable of type II$_\infty$}.

While his question was raised specifically for $\GL_n(\Q)$, which we are unable to treat at the moment, we do provide uncountably many characters of type II$_\infty$ for the linear groups $\SL_n(\Z)$, $n \geq 3$. Our approach follows the same ideas as in \cite{Be18}, except that we induce type II representations rather than finite dimensional ones.

The next proposition illustrates the main idea, even though the construction for $\SL_n(\Z)$ is a bit more elaborate.

Following Bekka-Kalantar \cite{BK19}, we say that a subgroup $\Lambda$ in a group $\Gamma$ is {\em a-normal} if $\Lambda \cap g\Lambda g^{-1}$ is amenable for every $g \in \Gamma \setminus \Lambda$.
For any group, we generically denote by $\lambda$ its left regular representation.

\begin{prop}\label{main prop}
Consider a group $\Gamma$ with a non-amenable, a-normal subgroup $\Lambda$. 
\begin{enumerate}
\item For every factorial representation $\pi_0: \Lambda \to \cU(H_0)$ which is not weakly contained in the regular representation, the induced representation $\pi := \Ind_{\Lambda}^{\Gamma}(\pi_0)$ is factorial and $\pi(\Gamma)''$ is naturally isomorphic with $B(\ell^2(\Gamma/\Lambda)) \ovt \pi_0(\Lambda)''$. Moreover, there exists a non-zero $x \in C^*(\pi(\Gamma))$ and a rank one projection $p_0 \in B(\ell^2(\Gamma/\Lambda))$ such that $x(p_0 \ot 1) = x$.
\item Given two representations $\pi_0,\pi_0'$ of $\Lambda$, denote by $\pi$ and $\pi'$, respectively, the induced $\Gamma$-representations. If $\pi$ is weakly contained in $\pi'$ then $\pi_0$ is weakly contained on $\pi_0' \oplus \lambda$. If $\pi_0$ is factorial, this further implies that $\pi_0$ is weakly contained in $\pi_0'$ or in $\lambda$.
\end{enumerate}
\end{prop}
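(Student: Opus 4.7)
For part (1), I would realize the induced space as $H \cong \ell^2(\Gamma/\Lambda) \ot H_0$ via a section fixing $e\Lambda \mapsto e$. The diagonal action $T \mapsto 1 \ot T$ of $\pi_0(\Lambda)'$ commutes with $\pi(\Gamma)$, giving $1 \ot \pi_0(\Lambda)' \subseteq \pi(\Gamma)'$. For the reverse inclusion, my plan is to apply Mackey's restriction theorem,
\[
\pi|_\Lambda \;\cong\; \pi_0 \;\oplus\; \bigoplus_{[s] \neq [e]} \Ind_{\Lambda_s}^{\Lambda}(\pi_0^s),
\]
summing over non-trivial double cosets $[s] \in \Lambda \backslash \Gamma/\Lambda$, with $\Lambda_s := \Lambda \cap s\Lambda s^{-1}$ and $\pi_0^s(\lambda) := \pi_0(s^{-1}\lambda s)$. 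Each $\Lambda_s$ is amenable by a-normality, hence $\pi_0^s|_{\Lambda_s} \prec \lambda_{\Lambda_s}$ and, by induction in stages, $\Ind_{\Lambda_s}^\Lambda(\pi_0^s) \prec \lambda_\Lambda$. The main technical step is a factoriality dichotomy: a non-zero intertwiner between $\pi_0$ and any $\rho$ yields, via polar decomposition, a common subrepresentation, which by factoriality of $\pi_0$ is quasi-equivalent to $\pi_0$ and is weakly contained in $\rho$; hence factorial $\pi_0$ is either disjoint from $\rho$ or weakly contained in $\rho$. Since $\pi_0 \not\prec \lambda_\Lambda$, it is disjoint from every non-diagonal Mackey summand, so every $T \in \pi(\Gamma)' \subseteq \pi|_\Lambda(\Lambda)'$ is block diagonal along this decomposition. $\Gamma$-equivariance plus transitivity of $\Gamma \actson \Gamma/\Lambda$ then force $T = 1 \ot T_0$ with $T_0 \in \pi_0(\Lambda)'$, yielding $\pi(\Gamma)'' \cong B(\ell^2(\Gamma/\Lambda)) \ovt \pi_0(\Lambda)''$.

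For the element $x$, the condition $\pi_0 \not\prec \lambda_\Lambda$ provides some $a \in \ker(\lambda_\Lambda) \subset C^*_{\max}(\Lambda)$ with $\pi_0(a) \neq 0$. Each non-diagonal Mackey summand, being weakly contained in $\lambda_\Lambda$, vanishes on $a$, so $\pi|_\Lambda(a)$ is supported on the $e$-fiber and acts there as $\pi_0(a)$. Norm-approximating $a$ by elements of $\C[\Lambda]$ places $x := \pi|_\Lambda(a)$ in $C^*(\pi(\Gamma))$; with $p_0 := |\delta_e\rangle\langle\delta_e|$ one has $x = (p_0 \ot 1)\, x\, (p_0 \ot 1)$, whence $x(p_0 \ot 1) = x$.

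For part (2), the same Mackey analysis gives $\Res_\Lambda \pi' = \pi_0' \oplus \sigma'$ with $\sigma' \prec \lambda_\Lambda$, while $\pi_0$ is a direct summand of $\Res_\Lambda \pi$. Since weak containment passes to restrictions, $\pi \prec \pi'$ gives $\pi_0 \prec \Res_\Lambda \pi' \prec \pi_0' \oplus \lambda_\Lambda$, the first claim. For the second I would translate to ideals of $C^*_{\max}(\Lambda)$: writing $J_1, J_2, J_3$ for the kernels of $\pi_0, \pi_0', \lambda_\Lambda$, the above reads $J_2 \cap J_3 \subseteq J_1$, so $J_2 J_3 \subseteq J_1$ by two-sidedness. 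Setting $I_k := \pi_0(J_k) \subseteq M_0 := \pi_0(\Lambda)''$ then yields $I_2 I_3 = 0$; SOT-closures in $M_0$ produce two weakly closed two-sided ideals with vanishing product, each cut by a central projection of $M_0$. Factoriality forces these projections into $\{0,1\}$, so one ideal vanishes. Thus $\pi_0 \prec \pi_0'$ or $\pi_0 \prec \lambda_\Lambda$, as required.
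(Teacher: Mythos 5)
Your argument for part (1) is essentially the paper's own proof: the same decomposition of $\Ind_\Lambda^\Gamma(\pi_0)|_\Lambda$ into the diagonal piece $\pi_0$ and a complement weakly contained in $\lambda_\Lambda$ via Mackey restriction and a-normality, the same disjointness argument (a non-zero intertwiner would give a common subrepresentation, forcing $\pi_0 \prec \lambda_\Lambda$ by factoriality) to show that every $T \in \pi(\Gamma)'$ commutes with the projection onto the $e\Lambda$-fiber, and the same transitivity computation to get $T = 1 \ot T_0$; the construction of $x$ from an element $a$ killed by $\lambda_\Lambda$ but not by $\pi_0$ is likewise the paper's. (One cosmetic remark: you only need $T$ to commute with the $e\Lambda$-fiber projection, not to be block diagonal with respect to the full double-coset decomposition, but that is all your disjointness argument actually delivers and all the transitivity step uses.) The first half of part (2) is also identical. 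The one genuine divergence is the final dichotomy in (2): the paper invokes its Lemma \ref{cut}, proved via Arveson's extension theorem and multiplicative-domain considerations, which yields the stronger statement that a representation weakly contained in $\pi_1 \oplus \pi_2$ splits as a direct sum of pieces weakly contained in each summand. You instead pass to kernels in $C^*_{\max}(\Lambda)$, observe $J_2 J_3 \subseteq J_2 \cap J_3 \subseteq J_1$, push forward to two SOT-closed two-sided ideals of the factor $\pi_0(\Lambda)''$ with vanishing product, and conclude from the triviality of central projections. This is correct (left and right multiplication by a fixed element are SOT continuous, so the product of the closures still vanishes) and is the more elementary route for the statement at hand; what it does not give you is the direct-sum decomposition of Lemma \ref{cut}, which the paper reuses later in Lemma \ref{going up} and Proposition \ref{main prop 2}.
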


\begin{cor}\label{cor1}
If $\Gamma$ contains a proper a-normal non-amenable virtual free subgroup, then it admits uncountably many factorial representations of type II$_\infty$ which are traceable, none of which weakly contains any other. 
\end{cor}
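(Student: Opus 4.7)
The plan is to apply Proposition \ref{main prop} to an uncountable family of suitable factorial representations of $\Lambda$. Concretely, I will produce factorial type II$_1$ representations $(\pi_0^\alpha)_\alpha$ of $\Lambda$, none weakly contained in $\lambda_\Lambda$ and no two of which weakly contain each other; the induced representations $\pi^\alpha := \Ind_\Lambda^\Gamma \pi_0^\alpha$ will then form the required family on $\Gamma$.

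To construct $(\pi_0^\alpha)$, I first work on a non-abelian free subgroup $F \leq \Lambda$ of finite index, which exists since $\Lambda$ is virtually free and non-amenable. For each ICC amenable finitely generated quotient $q: F \twoheadrightarrow A$, the representation $\rho_A := \lambda_A \circ q$ is factorial of type II$_1$ (generating the hyperfinite II$_1$ factor $L(A)$), and it is not weakly contained in $\lambda_F$: amenability of $A$ supplies approximately invariant unit vectors for $\rho_A$, whereas non-amenability of $F$ forbids them for $\lambda_F$. Classical constructions (e.g. Grigorchuk-type or solvable families) provide uncountably many pairwise non-isomorphic ICC amenable finitely generated quotients of $F$ with pairwise incomparable kernels; this yields representations $\rho_{A_\alpha}$ on $F$ with $\rho_{A_\alpha} \not\prec \rho_{A_\beta}$ for $\alpha \neq \beta$ and $\rho_{A_\alpha} \not\prec \lambda_F$. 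Inducing each $\rho_{A_\alpha}$ from $F$ to $\Lambda$ and extracting a factorial summand (possible since the induction has a finite-dimensional commutant as $[\Lambda : F] < \infty$) produces factorial type II$_1$ representations $\pi_0^\alpha$ of $\Lambda$. Since $\lambda_\Lambda|_F$ is, up to finite multiplicity, $\lambda_F$, non-weak-containment in $\lambda_\Lambda$ is inherited; and after possibly passing to an uncountable subfamily to absorb the finitely many $\Lambda$-twists of each $\rho_{A_\alpha}$, the resulting $(\pi_0^\alpha)$ remain pairwise non-weakly-containing.

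With $(\pi_0^\alpha)$ in hand, Proposition \ref{main prop}(1) gives that each $\pi^\alpha$ is factorial with $\pi^\alpha(\Gamma)'' \cong B(\ell^2(\Gamma/\Lambda)) \ovt \pi_0^\alpha(\Lambda)''$. The a-normality of $\Lambda$ together with its non-amenability forces $[\Gamma : \Lambda] = \infty$: otherwise, for $g \notin \Lambda$, $\Lambda \cap g\Lambda g^{-1}$ would have finite index in $\Lambda$, hence be non-amenable, contradicting a-normality. Thus $B(\ell^2(\Gamma/\Lambda))$ is of type I$_\infty$ and its tensor product with the II$_1$ factor $\pi_0^\alpha(\Lambda)''$ is of type II$_\infty$. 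The element $x$ from Proposition \ref{main prop}(1), satisfying $x(p_0 \ot 1) = x$ for a rank-one projection $p_0$, is supported on the finite projection $p_0 \ot 1$ and hence has finite trace in this factor, so $\pi^\alpha$ is traceable. Finally, for $\alpha \neq \beta$, if $\pi^\alpha \prec \pi^\beta$, then Proposition \ref{main prop}(2), combined with factoriality of $\pi_0^\alpha$, forces $\pi_0^\alpha \prec \pi_0^\beta$ or $\pi_0^\alpha \prec \lambda_\Lambda$; both are excluded by construction.

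The main obstacle is the first part: producing the uncountable family $(\pi_0^\alpha)$ with the required mutual incomparability. This reduces to the character theory of non-abelian free groups, specifically to the abundance of pairwise ``far apart'' ICC amenable quotients of $F$; the finite-index passage from $F$ up to $\Lambda$ requires a mild Mackey-theoretic check but is routine. Once the family is in hand, the rest of the argument is a direct application of Proposition \ref{main prop}.
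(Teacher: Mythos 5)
Your overall architecture coincides with the paper's: build an uncountable, pairwise non-weakly-containing family of amenable factorial type II$_1$ representations of $\Lambda$, starting from its finite-index free subgroup $F$ and going up by finite-index induction, then induce to $\Gamma$ and apply Proposition \ref{main prop}. The downstream part of your argument --- infinite index of $\Lambda$ via properness plus a-normality, the type II$_\infty$ identification of $B(\ell^2(\Gamma/\Lambda)) \ovt \pi_0^\alpha(\Lambda)''$, traceability from the finite projection $p_0 \ot 1$, and the use of part (2) together with factoriality to exclude mutual weak containment --- is correct and matches what the paper does (indeed you spell out details the paper leaves implicit).

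The gap is exactly at the step you yourself flag as the main obstacle, and it is a genuine one rather than a routine citation. What you need is uncountably many finitely generated ICC amenable quotients $q_\alpha : F \to A_\alpha$ such that no $\lambda_{A_\alpha}\circ q_\alpha$ is weakly contained in any $\lambda_{A_\beta}\circ q_\beta \circ \theta$ for $\beta \neq \alpha$, with $\theta$ ranging over the automorphisms of $F$ induced by $\Lambda$-conjugation; these twists appear unavoidably, since by Mackey the restriction to $F$ of the induced representation $\pi_0^\beta$ is a finite sum of such conjugates, and Lemma \ref{cut} only lets you conclude $\lambda_{A_\alpha}\circ q_\alpha \prec \lambda_{A_\beta}\circ q_\beta\circ\theta$ for \emph{some} $\theta$. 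Your ``pairwise incomparable kernels'' for Grigorchuk-type or solvable families is not supplied by any classical result in the form you need it (incomparability must hold modulo these automorphisms), and your proposed repair --- passing to an uncountable subfamily to absorb the finitely many twists --- does not obviously work: a single index could a priori be comparable, after twisting, to uncountably many others, so no counting argument extracts an uncountable independent subfamily. The paper closes this by taking the $A_\alpha$ to be \emph{simple}, using Juschenko--Monod's uncountably many finitely generated simple amenable groups: if $\lambda_{A_\alpha}\circ q_\alpha \prec \lambda_{A_\beta}\circ q_\beta\circ\theta$, then $\ker(q_\beta\circ\theta)\subseteq \ker q_\alpha$, so $q_\alpha$ factors through a surjection $A_\beta \to A_\alpha$ which by simplicity must be an isomorphism, forcing $\alpha=\beta$ uniformly over all $\theta$; it also arranges $F$ to be characteristic so that Lemma \ref{going up} applies cleanly. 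If you replace your ``classical constructions'' by this input (Lemma \ref{many rep}), your proof becomes the paper's.
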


In general it is not so easy to construct interesting a-normal subgroups in a given group. For example, $\GL_2(\Z)$ can be viewed as an a-normal subgroup of $\SL_3(\Z)$ via the top-left embedding, but for $n \geq 4$, we do not know if $\SL_n(\Z)$ admits a non-amenable a-normal proper subgroup at all. Nevertheless, the same ideas allow to prove that inducing factorial representations of certain products of copies of $\SL_2(\Z)$ in $\SL_n(\Z)$ will still give satisfactory factorial representations of $\SL_n(\Z)$.

\begin{thm}\label{cor2}
For every $n \geq 2$, $\SL_n(\Z)$ admits uncountably many factorial representations of type II$_\infty$ which are traceable, none of which weakly contains any other.
\end{thm}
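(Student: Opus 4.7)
The strategy is to reduce, case by case in $n$, to (a variant of) Proposition~\ref{main prop}.

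For $n = 2$, $\SL_2(\Z)$ is virtually free and hence a non-elementary hyperbolic group. A standard Schottky / ping-pong construction then produces a proper, quasi-convex, non-amenable free subgroup of rank at least two that is malnormal, hence a-normal, in $\SL_2(\Z)$, so Corollary~\ref{cor1} applies directly. For $n = 3$, the top-left embedding $\GL_2(\Z) \hookrightarrow \SL_3(\Z)$ mentioned in the paragraph preceding the theorem exhibits a non-amenable, virtually free, a-normal subgroup, and Corollary~\ref{cor1} again yields the conclusion.

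The main case is $n \geq 4$. Following the hint in the introduction, I would use the block-diagonal embedding $\Lambda := \SL_2(\Z)^{k} \hookrightarrow \SL_n(\Z)$ with $k = \lfloor n/2 \rfloor$ (padding with a $1\times 1$ identity block when $n$ is odd), and induce tensor-product representations $\pi_0 = \pi_1 \otimes \cdots \otimes \pi_k$, in which each $\pi_j$ is one of the uncountably many type II$_\infty$ factorial representations of $\SL_2(\Z)$ supplied by the $n = 2$ case.

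The main obstacle is that $\Lambda$ is \emph{not} a-normal in $\SL_n(\Z)$: for instance, block permutations of equal-size blocks normalize $\Lambda$. My plan is therefore to prove a ``product version'' of Proposition~\ref{main prop} in which a-normality is replaced by the condition that, for every $g \in \SL_n(\Z) \setminus \Lambda$, the intersection $\Lambda_g = \Lambda \cap g\Lambda g^{-1}$ has amenable projection to at least one of the $\SL_2(\Z)$-factors of $\Lambda$. Together with the tensor structure of $\pi_0$ and the non-weak-containment of each $\pi_j$ in the regular representation, this should allow one to run the same Mackey double-coset analysis of $\Ind_\Lambda^{\SL_n(\Z)}(\pi_0)|_\Lambda$: non-diagonal pieces get absorbed into a regular-type summand, while $g = e$ contributes $\pi_0$ itself, yielding factoriality of the induced representation together with a non-zero finite-trace element in its $C^*$-algebra. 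Varying the tuple $(\pi_j)_{j=1}^k$ over the uncountable incomparable family from the $n = 2$ case, and invoking the product analogue of part~(2) of Proposition~\ref{main prop}, then produces the uncountable family of pairwise weakly incomparable representations of $\SL_n(\Z)$.
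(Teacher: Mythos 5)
Your overall strategy for $n \geq 4$ is the one the paper follows (block-diagonal $\Sigma = \SL_2(\Z)^d$, a weakened a-normality property for the family of $\SL_2(\Z)$-factors, and a Mackey double-coset analysis of the induced representation restricted to $\Sigma$), and your treatment of $n=2,3$ is compatible with Corollary \ref{virtual free cor} and with the remark about $\GL_2(\Z) < \SL_3(\Z)$. However, there are two genuine gaps.

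First, your building blocks are the wrong type: you propose to tensor the type II$_\infty$ representations of $\SL_2(\Z)$ obtained from the $n=2$ case. Then $\pi_0(\Lambda)''$ is a II$_\infty$ factor, and the finite-trace element produced by the argument of Proposition \ref{main prop} --- an $x$ with $x = x(p_0 \ot 1)$ for a rank-one projection $p_0$ --- no longer has finite trace, since $\Tr(p_0)\cdot\Tr(1) = \infty$ when the second tensor factor is II$_\infty$. Traceability of the induced representation therefore does not follow from your argument as stated. The paper instead uses \emph{finite} (hyperfinite II$_1$) factorial representations of $\SL_2(\Z)$, obtained from amenable infinite simple quotients of a finite-index free subgroup (Lemma \ref{many rep}); these are amenable, hence not weakly contained in $\lambda$, and they make $p_0 \ot 1$ a finite-trace projection, which is exactly what traceability requires.

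Second, the replacement condition you state for a-normality --- that for \emph{every} $g \in \SL_n(\Z)\setminus\Lambda$ the intersection $\Lambda \cap g\Lambda g^{-1}$ has amenable projection to some factor --- is false, for precisely the reason you yourself note: block permutations normalize $\Lambda$ without lying in $\Lambda$, and for such $g$ the intersection is all of $\Lambda$. The correct statement (Lemma \ref{a-normal family}) concerns $g$ outside the \emph{normalizer} $N_\Gamma(\Sigma)$, and asserts amenability of $\Sigma_k \cap g N_\Gamma(\Sigma) g^{-1}$ for some $k$ (the intersection with a factor, which is what the Mackey decomposition over $\Sigma_k \backslash \Gamma / N_\Gamma(\Sigma)$ actually uses, and is weaker than your projection condition). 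This forces induction in two stages, $\Sigma \to N_\Gamma(\Sigma) \to \Gamma$, together with a separate finite-index lemma (Lemma \ref{going up}) to control the first stage: the induced representation is then only a finite direct sum of factorial representations, and the separation of the uncountable family must be carried out modulo the permutation action of $N_\Gamma(\Sigma)$ on the factors $\Sigma_k$, which is why the paper chooses tuples $(\pi_i^1,\dots,\pi_i^d)$ with $\pi_i^k \not\prec \pi_j^\ell\circ\alpha$ for all $(i,k)\neq(j,\ell)$ and all automorphisms $\alpha$. Your sketch identifies the normalizer problem but does not carry out either of these repairs.
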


Our construction is ad hoc. We don't know if a similar result holds for, say, co-compact lattices in higher rank semi-simple Lie groups.
We point out that a similar argument can be used to produce factorial representations of type III of $\SL_n(\Z)$.

\subsection*{Acknowledgement} 
We thank Cyril Houdayer for inspiring discussions on this question and Bachir Bekka for useful comments on an earlier version of this note.

\section{Preliminaries}

\subsection{Factorial representations and weak containment}

By definition, a unitary representation $\pi: \Gamma \to \cU(H)$ is called {\em factorial} if the von Neumann algebra $\pi(\Gamma)''$ is a factor. We sometimes extend this terminology to specify the von Neumann type of $\pi(\Gamma)''$.

\begin{lem}\label{factorial}
If $\pi: \Gamma \to \cU(H)$ is a factorial unitary representation, then every subrepresentation of $\pi$ is weakly equivalent to $\pi$. More generally, if $\pi$ is the direct sum $\pi_1 \oplus \dots \oplus \pi_n$ of finitely many factorial representations  $\pi_1, \dots, \pi_n$ of $\Gamma$, then every subrepresentation of $\pi$ weakly contains one of the $\pi_i$'s.
\end{lem}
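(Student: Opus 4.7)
The plan is to identify subrepresentations with non-zero projections in the commutant and then to use polar decomposition inside $\pi(\Gamma)'$ to connect the parts of $\sigma$ visible in each summand to subrepresentations of the factorial pieces.

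For the first assertion, assume $\pi$ is factorial and let $q \in \pi(\Gamma)'$ be a non-zero projection, giving the subrepresentation $\sigma = \pi|_{qH}$. Clearly $\sigma$ is weakly contained in $\pi$. For the converse, I would consider the map $\phi : \pi(\Gamma)'' \to B(qH)$, $x \mapsto xq|_{qH}$. Because $q \in \pi(\Gamma)'$ we have $xq = qxq$, so $\phi$ is a normal $*$-homomorphism onto $q\pi(\Gamma)''q \cong \sigma(\Gamma)''$. Its kernel is a weak-$*$ closed two-sided ideal in $\pi(\Gamma)''$, and since this algebra is a factor the kernel is either $0$ or the whole algebra. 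As $\phi(1) = q \neq 0$, the kernel is $0$, hence $\phi$ is isometric. Taking $x = \pi(f)$ for $f \in \C[\Gamma]$ gives $\|\sigma(f)\| = \|\pi(f)q\| = \|\pi(f)\|$, which is the desired weak equivalence.

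For the second assertion, write $\pi = \pi_1 \oplus \cdots \oplus \pi_n$, let $p_i$ denote the projection onto $H_i$, and note that $p_i \in \pi(\Gamma)'$ and that $\pi_i(\Gamma)'$ is naturally identified with the corner $p_i \pi(\Gamma)' p_i$. Let $q \in \pi(\Gamma)'$ be a non-zero projection defining the subrepresentation $\sigma = \pi|_{qH}$. Because $\sum_i p_i = 1$ and $q \neq 0$, at least one of the operators $p_i q$ is non-zero. I would then take the polar decomposition $p_i q = v\, |p_iq|$ inside the von Neumann algebra $\pi(\Gamma)'$, so that $v \in \pi(\Gamma)'$ is a partial isometry. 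One checks immediately from the ranges that $s := v^*v \leq q$ and $r := vv^* \leq p_i$.

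Since $v \in \pi(\Gamma)'$, it intertwines $\pi$, and therefore implements a unitary equivalence between the subrepresentations $\pi|_{sH}$ and $\pi|_{rH}$. The latter is a non-zero subrepresentation of $\pi_i$, so by the first assertion, applied to the factorial representation $\pi_i$, it is weakly equivalent to $\pi_i$ itself. The former is a subrepresentation of $\sigma$. Chaining these relations,
\[
\pi_i \sim_{\mathrm{w}} \pi|_{rH} \cong \pi|_{sH} \hookrightarrow \sigma,
\]
so $\pi_i$ is weakly contained in $\sigma$, as required. The conceptual step worth highlighting is the use of polar decomposition inside $\pi(\Gamma)'$ to produce the intertwiner $v$ matching a piece of $\sigma$ with a piece of one of the $\pi_i$'s; beyond that, each verification is routine and I do not anticipate any real obstacle.
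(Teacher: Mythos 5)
Your proof is correct, but it takes a slightly different route from the paper's. The paper proves the general statement in one stroke using central supports in the commutant: it takes the central support $z \in \cZ(\pi(\Gamma)'')$ of the projection $p$ defining the subrepresentation, observes that $zp_i \in \{0, p_i\}$ because each corner $p_i\pi(\Gamma)''$ is a factor, picks $i$ with $zp_i = p_i$, and then uses the defining property of the central support ($px = 0 \Rightarrow zx = 0$ for $x \in \pi(\Gamma)''$) to conclude $p_i x = 0$, i.e. $\pi_i \prec p\pi$. You instead first establish the factorial case via the observation that the induction $x \mapsto xq|_{qH}$ is a normal $*$-homomorphism whose kernel is a weak-$*$ closed ideal, hence zero by factoriality, hence isometric; you then reduce the general case to this one by polar decomposition of $p_i q$ in $\pi(\Gamma)'$, producing equivalent subprojections $s \leq q$ and $r \leq p_i$ and chaining $\pi_i \sim_{\mathrm w} \pi|_{rH} \cong \pi|_{sH} \subset \sigma$. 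Both arguments are elementary comparison theory in the commutant and of comparable length; the paper's is marginally more economical in that it never needs the first assertion as a separate step, while yours makes the mechanism more explicit (an honest intertwiner matching a piece of $\sigma$ with a piece of $\pi_i$) and isolates the clean statement that induction of a factor to a corner is isometric. No gaps.
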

\begin{proof}
Clearly the second statement implies the first one. Assume that $\pi = \pi_1 \oplus \dots \oplus \pi_n$, for finitely many factorial representations  $\pi_1, \dots, \pi_n$ of $\Gamma$. Denote by $p_1,\dots, p_n$ the orthogonal projections on each of these direct summands, so that $\pi_i = p_i\pi$ for $i = 1,\dots,n$. 

Consider a non-zero invariant subspace $K \subset H$ and denote by $p \in B(H)$ the orthogonal projection onto $K$. Denote by $z \in \cZ(\pi(\Gamma)'')$ the central support of $p \in \pi(\Gamma)'$. Note that for every $i = 1,\dots, n$, $zp_i$ is either $0$ or $p_i$, because $p_i\pi(\Gamma)'' = \pi_i(\Gamma)''$ is a factor. 

Choosing an index $i$ such that $zp_i \neq 0$, we thus get $zp_i = p_i$. 
Assume that $x \in \pi(\Gamma)''$ is such that $px = 0$. By definition of the central support, this implies that $zx = 0$, and further, $p_ix = p_izx = 0$. So $\pi_i = p_i\pi$ is weakly contained in $p\pi$.
\end{proof}

\begin{lem}\label{cut}
If $\pi: \Gamma \to \cU(H)$ is a unitary representation which is weakly contained in the direct sum of two representations $\pi_1 \oplus \pi_2$, then $\pi$ is the direct sum of two representations: one weakly contained in $\pi_1$ and one weakly contained in $\pi_2$. If moreover $\pi$ is factorial then it is weakly contained in $\pi_1$ or $\pi_2$.
\end{lem}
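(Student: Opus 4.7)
The natural approach is to translate weak containment into a C*-algebraic factorization and then split $H$ using the two canonical kernel ideals. Setting $A := C^*((\pi_1 \oplus \pi_2)(\Gamma))$, compression to each block-diagonal summand gives surjective $*$-homomorphisms $q_i : A \to A_i := C^*(\pi_i(\Gamma))$, and the injectivity of the resulting embedding $A \hookrightarrow A_1 \oplus A_2$ shows that the ideals $J_i := \ker q_i$ satisfy $J_1 \cap J_2 = 0$ while $A/J_i \cong A_i$. The hypothesis $\pi \prec \pi_1 \oplus \pi_2$ amounts to saying that $\pi$ extends to a $*$-representation $\tpi : A \to B(H)$.

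I would then form the closed $\tpi(A)$-invariant subspaces $K_i := \overline{\tpi(J_i) H}$ for $i = 1, 2$. Since $J_1 \cap J_2 = 0$ forces $J_1 J_2 = J_2 J_1 = 0$, short computations yield $K_1 \perp K_2$ and $\tpi(J_j)|_{K_i} = 0$ for $i \neq j$. Setting $K_0 := (K_1 \oplus K_2)^\perp$, which is still $\tpi(A)$-invariant, the inclusions $\tpi(J_i) K_0 \subset K_i \cap K_0 = \{0\}$ show that both $J_1$ and $J_2$ annihilate $K_0$. Consequently $\tpi|_{K_1}$ factors through $A/J_2 \cong A_2$, giving $\pi|_{K_1} \prec \pi_2$; symmetrically $\pi|_{K_2} \prec \pi_1$; and $\pi|_{K_0}$ is weakly contained in each of $\pi_1$ and $\pi_2$. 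The decomposition $H = (K_2 \oplus K_0) \oplus K_1$ therefore produces the desired splitting, with the first summand weakly contained in $\pi_1$ and the second in $\pi_2$.

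For the second assertion, assume $\pi$ is factorial and write $H = H_1 \oplus H_2$ with $\pi|_{H_j} \prec \pi_j$. At least one of $H_1, H_2$ is nonzero; by Lemma~\ref{factorial} the corresponding subrepresentation is weakly equivalent to $\pi$, so transitivity of weak containment gives $\pi \prec \pi_j$ for the matching $j$.

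The heart of the argument is the orthogonality $K_1 \perp K_2$, which rests on the vanishing $J_1 J_2 = 0$ forced by $J_1 \cap J_2 = 0$. The secondary delicacy is the ``leftover'' subspace $K_0$: one must notice that it is annihilated by both ideals (so that its representation is weakly contained in both $\pi_1$ and $\pi_2$), which is exactly what permits it to be appended to either half of the splitting without harm. Everything else is routine bookkeeping with C*-algebra ideals.
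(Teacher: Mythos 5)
Your argument is correct, but it follows a genuinely different route from the paper's. You work entirely at the level of ideals: letting $J_i$ be the kernel of the compression $q_i : A \to A_i$ of the block-diagonal algebra $A = C^*((\pi_1\oplus\pi_2)(\Gamma))$, you use $J_1\cap J_2 = 0$ (hence $J_1J_2 = 0$) to produce mutually orthogonal invariant subspaces $K_i = \overline{\tpi(J_i)H}$, and you correctly handle the leftover piece $K_0$, which is annihilated by both ideals and can therefore be attached to either summand. This is the classical decomposition of a representation of a subdirect sum $A \hookrightarrow A_1\oplus A_2$, and it is more elementary than the paper's proof: no Arveson extension, no ucp maps, no multiplicative-domain or support-projection arguments. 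The paper instead extends the $*$-morphism $C^*(\pi_1\oplus\pi_2)\to C^*(\pi)$ to a ucp map $\Phi : B(H_1\oplus H_2)\to B(H)$, notes that $\Phi(p_1),\Phi(p_2)$ land in $\pi(\Gamma)'$ by multiplicative-domain considerations, and uses the support projections $r_i$ of $\Phi(p_i)$ together with $r_1+r_2\geq \Phi(p_1)+\Phi(p_2)=1$ to split $H$. For this lemma the two proofs buy the same thing, and yours is arguably cleaner; the payoff of the paper's ucp technique is that it transfers to the proof of Proposition~\ref{main prop 2}, where the relevant projections $p_k$ commute only with $\pi_2(\Sigma_k)$ rather than with the whole group algebra, so they do not come from central ideals and the purely ideal-theoretic argument would not apply verbatim. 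Your treatment of the factorial case via Lemma~\ref{factorial} matches the paper's.
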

\begin{proof}
By assumption the map $\pi_1(g) \oplus \pi_2(g) \mapsto \pi(g)$ extends to a C*-morphism $C^*(\pi_1 \oplus \pi_2) \to C^*(\pi)$. By Arveson extension theorem, this morphism extends to a ucp map $\Phi: B(H_1 \oplus H_2) \to B(H)$. Denote by $p_1, p_2 \in B(H_1 \oplus H_2)$ the orthogonal projections onto $H_1,H_2$, respectively. By multiplicative domain considerations, $\Phi(p_1), \Phi(p_2) \in \pi(\Gamma)'$.
For $i = 1,2$, denote by $r_i \in \pi(\Gamma)'$ the support projection of $\Phi(p_i) \in \pi(\Gamma)'$.

{\bf Claim .} For $x$ in the universal C*-algebra $C^*(\Gamma)$, if $\pi_i(x) = 0$, then $r_i \pi(x) = 0$. In particular, $r_i\pi$ is weakly contained in $\pi_i$.

Indeed $\pi_i(x) = 0$ means that $p_i(\pi_1 \oplus \pi_2)(x) = 0$. Applying $\Phi$, and using multiplicative domain, we get that $\Phi(p_i)\pi(x) = 0$. This easily implies the claim, by definition of the support projection.

Since $p_1 + p_2 = 1$, we find that $\Phi(p_1), \Phi(p_2), r_1$ and $r_2$ all commute to each other. Moreover since $\Phi$ is a ucp map, we have $0 \leq \Phi(p_i) \leq 1$, showing that $r_i \geq \Phi(p_i)$, for $i = 1,2$. Therefore $r_1 + r_2 \geq \Phi(p_1) + \Phi(p_2) = 1$. So $1 - r_1 \leq r_2$. From the claim, it follows that $r_1 \pi \prec \pi_1$, while $(1-r_1)\pi \subset r_2\pi \prec \pi_2$. This gives the desired decomposition $\pi = r_1\pi \oplus (1-r_1)\pi$.

The moreover part follows from Lemma \ref{factorial}.
\end{proof}

\subsection{Induced representations}

In this section we are given two groups $\Lambda < \Gamma$ and a unitary representation $\pi_0$ of $\Lambda$.

Denote by $s: \Gamma/\Lambda \to \Gamma$ a section to the natural projection map, and by $c: \Gamma \times \Gamma/\Lambda \to \Lambda$ the cocycle given by the formula $c(g,x) = s(gx)^{-1}gs(x)$, for $g \in \Gamma$, $x \in \Gamma/\Lambda$.
By definition, the representation $\pi = \Ind_\Lambda^\Gamma(\pi_0)$ is defined on the Hilbert space $H = \ell^2(\Gamma/\Lambda) \ot H_0$ by the formula
\[\pi_g(\delta_x \ot \xi) = \delta_{gx} \ot (\pi_0)_{c(g,x)}\xi, \text{ for all } g \in \Gamma, x \in \Gamma/\Lambda, \xi \in H.\]

The following easy lemma is a special case of a result of Mackey \cite{Mac52}. It is given in this form in \cite[Proposition 9]{Be18}.

\begin{lem}\label{Mackey}
Given another subgroup $\Sigma < \Gamma$, denote by $S \subset \Gamma$ a system of representatives for the double coset space $\Sigma \backslash \Gamma / \Lambda$. For each $s \in S$, we denote by $\pi_s$ the representation of $s\Lambda s^{-1}$ given by $\pi_s(sgs^{-1}) = \pi_0(g)$, for all $g \in \Lambda$. Then the restriction of $\pi$ to $\Sigma$ is equivalent to the direct sum 
\[\bigoplus_{s \in S} \Ind_{s\Lambda s^{-1} \cap \Sigma}^\Sigma({\pi_s|}_{s\Lambda s^{-1} \cap \Sigma}).\]
\end{lem}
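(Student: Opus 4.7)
The plan is to decompose $\ell^2(\Gamma/\Lambda)\otimes H_0$ according to the orbits of $\Sigma$ acting on $\Gamma/\Lambda$ by left multiplication, and identify each orbit summand with the claimed induced representation of $\Sigma$.

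First I would note that the $\Sigma$-orbits on $\Gamma/\Lambda$ are parametrized by the double coset space $\Sigma\backslash\Gamma/\Lambda$, hence by $S$: indeed $g\Lambda$ and $g'\Lambda$ lie in the same $\Sigma$-orbit iff $g,g'$ belong to the same double coset. For $s\in S$, the stabilizer of $s\Lambda\in\Gamma/\Lambda$ in $\Sigma$ is exactly $s\Lambda s^{-1}\cap\Sigma$, so the orbit $\Sigma s\Lambda/\Lambda$ is $\Sigma$-equivariantly identified with $\Sigma/(s\Lambda s^{-1}\cap\Sigma)$ via $\sigma(s\Lambda s^{-1}\cap\Sigma)\mapsto \sigma s\Lambda$. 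This yields a $\Sigma$-equivariant orthogonal decomposition
\[H=\ell^2(\Gamma/\Lambda)\otimes H_0=\bigoplus_{s\in S}H_s,\qquad H_s:=\ell^2(\Sigma s\Lambda/\Lambda)\otimes H_0,\]
and splits $\pi|_\Sigma$ as the corresponding direct sum of compressions.

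Next, for each $s\in S$, I would choose a section $\tau_s:\Sigma/(s\Lambda s^{-1}\cap\Sigma)\to\Sigma$, with associated cocycle $c_s(\sigma,y)=\tau_s(\sigma y)^{-1}\sigma\tau_s(y)$ taking values in $s\Lambda s^{-1}\cap\Sigma$. The induced representation $\rho_s:=\Ind_{s\Lambda s^{-1}\cap\Sigma}^\Sigma(\pi_s|_{s\Lambda s^{-1}\cap\Sigma})$ then acts on $\ell^2(\Sigma/(s\Lambda s^{-1}\cap\Sigma))\otimes H_0$. I would define
\[U_s:\ell^2(\Sigma/(s\Lambda s^{-1}\cap\Sigma))\otimes H_0\to H_s,\qquad \delta_y\otimes\xi\mapsto \delta_{\tau_s(y)s\Lambda}\otimes\xi,\]
which is unitary since $y\mapsto \tau_s(y)s\Lambda$ is a bijection onto $\Sigma s\Lambda/\Lambda$, and claim that $U_s$ intertwines $\rho_s$ with $\pi|_\Sigma$ acting on $H_s$.

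Since the equivalence class of $\pi=\Ind_\Lambda^\Gamma(\pi_0)$ does not depend on the section $\Gamma/\Lambda\to\Gamma$, I may choose the global section so that on the orbit of $s\Lambda$ it reads $\tau_s(y)s\Lambda\mapsto\tau_s(y)s$. With this compatibility one directly computes, for $\sigma\in\Sigma$ and $y\in\Sigma/(s\Lambda s^{-1}\cap\Sigma)$,
\[c(\sigma,\tau_s(y)s\Lambda)=(\tau_s(\sigma y)s)^{-1}\sigma\tau_s(y)s=s^{-1}c_s(\sigma,y)s,\]
and hence $\pi_0(c(\sigma,\tau_s(y)s\Lambda))=\pi_s(c_s(\sigma,y))$ by the very definition of $\pi_s$. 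This is exactly the intertwining identity. Summing the $U_s$ over $s\in S$ yields the claim. The only real point of care is the cocycle bookkeeping above — arranging the two sections compatibly and checking this identity — everything else is an elementary orbit decomposition.
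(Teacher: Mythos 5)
Your proof is correct. The paper does not actually prove this lemma --- it is quoted as a special case of Mackey's restriction--induction theorem, with a pointer to \cite[Proposition 9]{Be18} --- so there is no argument to compare against, but what you give is the standard (and complete) orbit-decomposition proof: the identification of $\Sigma$-orbits on $\Gamma/\Lambda$ with double cosets, the computation of the stabilizer of $s\Lambda$ as $s\Lambda s^{-1}\cap\Sigma$, and the cocycle identity $c(\sigma,\tau_s(y)s\Lambda)=s^{-1}c_s(\sigma,y)s$ are all handled correctly, including the one genuinely delicate point, namely that you may choose the global section of $\Gamma/\Lambda$ compatibly with the sections $\tau_s$ because the equivalence class of an induced representation is independent of the section.
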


\subsection{Finite index considerations}

We will sometimes need to induce representations from normal finite index subgroups. In this context we prove the following stability result. 

\begin{lem}\label{going up}
Consider a group $\Gamma$ with a finite index normal subgroup $\Lambda < \Gamma$. Consider a factorial unitary representation $\pi: \Lambda \to \cU(H)$ and denote by $\rho$ the induced representation $\rho = \Ind_\Lambda^\Gamma(\pi)$.
\begin{enumerate}
\item Then $\rho$ is the direct sum of finitely many factorial representations. If $\pi$ is of type II$_1$, so is $\rho$. 
\item Consider another factorial representation $\pi'$ of $\Lambda$ and its induced representation $\rho'$. Take an automorphism $\alpha \in \Aut(\Gamma)$ such that $\alpha(\Lambda) = \Lambda$. If a subrepresentation of $\rho'$ is weakly contained in $\rho \circ \alpha$, then $\pi'$ is weakly contained in $\pi \circ \Ad_g \circ \alpha$ for some $g \in \Gamma$ (and $\Ad_g$ denotes the automorphism of $\Lambda$ given by $g$-conjugation inside $\Gamma$). 
\end{enumerate}
\end{lem}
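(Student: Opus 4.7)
The plan is to restrict everything to $\Lambda$ and leverage Mackey's formula together with the factorial dichotomies of Lemmas \ref{factorial} and \ref{cut}.

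For part (1), I would first invoke Lemma \ref{Mackey} with $\Sigma = \Lambda$. Because $\Lambda$ is normal in $\Gamma$, the double cosets collapse to cosets, $s\Lambda s^{-1}\cap\Lambda = \Lambda$, and the inner inductions $\Ind^\Lambda_\Lambda$ are trivial, producing
$$\rho|_\Lambda \;\cong\; \bigoplus_{s \in \Gamma/\Lambda} \pi \circ \Ad_{s^{-1}},$$
a finite direct sum of factorial representations of $\Lambda$ (each summand factorial because $\Ad_{s^{-1}}$ is an automorphism of $\Lambda$). Then $\rho(\Gamma)''$ is generated by $\rho(\Lambda)''$ together with the finitely many unitaries $\rho(g)$ for $g$ in a coset system for $\Gamma/\Lambda$; these unitaries normalize $\rho(\Lambda)''$ and exhibit $\rho(\Gamma)''$ as a (possibly twisted) crossed product of $\rho(\Lambda)''$ by the finite group $\Gamma/\Lambda$. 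Since $\cZ(\rho(\Lambda)'')$ is finite-dimensional (as $\rho(\Lambda)''$ is a finite direct sum of factors), so is $\cZ(\rho(\Gamma)'')$, and decomposing $\rho$ along its center realizes it as a finite direct sum of factorial representations. For the type II$_1$ preservation I would use that the natural trace on $\rho(\Lambda)''$, obtained by averaging the traces of the factor summands, is invariant under conjugation by the $\rho(g)$'s, hence extends to a faithful normal finite trace on $\rho(\Gamma)''$; each factor summand is thus finite and contains $\pi(\Lambda)''$ as a subfactor, so must itself be of type II$_1$.

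For part (2), I would restrict the hypothesis to $\Lambda$. By part (1) applied to $\rho'$, write $\rho' = \rho'_1\oplus\cdots\oplus\rho'_k$ as a finite direct sum of factorial representations. Lemma \ref{factorial} applied to the subrepresentation of $\rho'$ weakly contained in $\rho\circ\alpha$ yields an index $i$ with $\rho'_i \prec \rho\circ\alpha$. Restricting to $\Lambda$ and using $\alpha(\Lambda)=\Lambda$ gives
$$\rho'_i|_\Lambda \;\prec\; (\rho\circ\alpha)|_\Lambda \;=\; \bigoplus_{t\in\Gamma/\Lambda} \pi\circ\Ad_{t^{-1}}\circ\alpha.$$
On the other hand, $\rho'_i|_\Lambda$ is a subrepresentation of $\rho'|_\Lambda = \bigoplus_s \pi'\circ\Ad_{s^{-1}}$, so another application of Lemma \ref{factorial} gives $s$ with $\pi'\circ\Ad_{s^{-1}} \prec \rho'_i|_\Lambda$. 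Chaining these and using that $\pi'\circ\Ad_{s^{-1}}$ is factorial, Lemma \ref{cut} upgrades the weak containment in the direct sum to weak containment in a single summand $\pi\circ\Ad_{t^{-1}}\circ\alpha$. Precomposing with $\Ad_s$ and using the identity $\alpha\circ\Ad_s = \Ad_{\alpha(s)}\circ\alpha$ (as automorphisms of $\Lambda$), this rewrites as $\pi' \prec \pi\circ\Ad_{t^{-1}\alpha(s)}\circ\alpha$, giving the desired $g = t^{-1}\alpha(s)\in\Gamma$.

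The main obstacle I expect is in part (1), specifically the finite-dimensionality of $\cZ(\rho(\Gamma)'')$. A central element of $\rho(\Gamma)''$ need not lie in $\rho(\Lambda)''$ (only in its commutant), so one cannot directly intersect with $\cZ(\rho(\Lambda)'')$. The cleanest route is via the crossed-product description of $\rho(\Gamma)''$ over the finite group $\Gamma/\Lambda$, where finite-dimensionality of the center is standard. Part (2), by contrast, is essentially a bookkeeping exercise tracking the automorphisms $\Ad_\bullet$ and iterating the preliminary lemmas.
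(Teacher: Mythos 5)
Your argument for part (2) is essentially the paper's: the same restriction formula $\rho|_\Lambda \cong \bigoplus_{s\Lambda\in\Gamma/\Lambda}\pi\circ\Ad_{s^{-1}}$ (the paper reads it off directly from the definition of induction rather than via Lemma \ref{Mackey}, and applies Lemma \ref{factorial} to $\sigma|_\Lambda$ instead of first splitting $\rho'$ into factorial summands), the same use of Lemma \ref{cut} to isolate one summand, and the same bookkeeping $g=t^{-1}\alpha(s)$.

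Part (1) is where you genuinely diverge, and where your two load-bearing steps are glossed. The paper sets $N:=\rho(\Lambda)''\subset M:=\rho(\Gamma)''\subset \tM:=B(\ell^2(\Gamma/\Lambda))\ovt\pi(\Lambda)''$ and observes that the coordinate projections $p_a$ onto $\delta_a\ot H$ satisfy $p_aN=p_a\tM p_a$, hence are minimal in $N'\cap\tM$; since they form a finite partition of unity, $N'\cap\tM$ is finite dimensional, and $\cZ(M)\subset N'\cap\tM$. The type II$_1$ claim is then immediate: $M$ is a von Neumann subalgebra of the finite algebra $\tM$, so it is finite, and it contains the type II algebra $N$, so it has no type I summand. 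Your crossed-product route can be made to work, but note: (a) $M$ is only a normal quotient (a corner) of a cocycle crossed product of $N$ by $\Gamma/\Lambda$, and the finite-dimensionality of the center of such an object is not free --- it rests on the uniqueness of the decomposition $x=\sum_s x_s u_s$ and on the fact that the space of intertwiners between two isomorphisms of factors is at most one-dimensional; (b) the inference ``the trace on $N$ is $\Ad\rho(g)$-invariant, \emph{hence} extends to $M$'' is not valid as stated --- invariance under normalizing unitaries does not by itself produce an extension to the algebra they generate; one needs a conditional expectation $M\to N$ (again the basis property) or, much more simply, the ambient finite algebra $\tM$. The paper's device of embedding everything into $\tM$ is exactly what lets it sidestep both points, and I would recommend adopting it: it replaces your ``standard'' crossed-product facts by a two-line computation with the projections $p_a$.
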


\begin{proof}
(1) By definition $\rho$ is acting on $\ell^2(\Gamma/\Lambda) \ovt H$. Since $\Lambda$ is normal in $\Gamma$, its restriction to $\Lambda$ is given by
\[\rho_h(\delta_{g\Lambda} \ot \xi) = \delta_{g\Lambda} \ovt \pi_{g^{-1}hg}(\xi), \text{ for every } h \in \Lambda, g \in \Gamma, \xi \in H.\]
In other words, $\rho|_\Lambda$ is equivalent to $\bigoplus_{g\Lambda \in \Gamma/\Lambda} \pi \circ \Ad_{g^{-1}}$. Here we note that the equivalence class of $\pi \circ \Ad_{g^{-1}}$ does not depend on the choice of the representative $g \in \Gamma$ in the class $g\Lambda \in \Gamma/\Lambda$.

Consider the von Neumann algebras $N \subset M \subset \tM$ defined by $N = \rho(\Lambda)''$, $M = \rho(\Gamma)''$ and $\tM = B(\ell^2(\Gamma/\Lambda)) \ovt \pi(\Lambda)''$.
For every $a \in \Gamma/\Lambda$, denote by $p_a \in N' \cap \tM$ the orthogonal projection onto $\delta_a \ot H$. Then we see that $p_a N = \delta_a \ot \pi(\Lambda)'' = p_a\tM p_a$.

In particular, $p_a(N' \cap \tM)p_a = \C p_a$ for every $a \in \Gamma/\Lambda$. So $N'\cap \tM$ admits a finite partition of unity consisting of minimal projections; it must be finite dimensional. In particular $\cZ(M)$ is finite dimensional, which precisely means that $\rho$ is the direct sum of finitely many factorial representations.

Assume that $\pi(\Lambda)''$ is of type II$_1$. Then $\tM$ is also of type II$_1$, and hence $M$ is a tracial von Neumann algebra. Moreover it contains $N$, which is of type II by our description of $\rho|_\Lambda$. So $M$ has no type $I$ direct summand, which proves that it is of type II$_1$.

(2) Take a subrepresentation $\sigma$ of $\rho'$. By Lemma \ref{factorial}, we find that $\sigma|_\Lambda$ weakly contains a representation of the form $\pi' \circ \Ad_h$ for some $h \in \Gamma$. So if $\sigma$ is weakly contained in $\rho \circ \alpha$, then restricting to $\Lambda$, we find that $\pi' \circ \Ad_h$ is weakly contained in $\bigoplus_{g\Lambda \in \Gamma/\Lambda} \pi\circ \Ad_g \circ \alpha$. Since $\pi'$ is factorial, Lemma \ref{cut} shows that $\pi' \circ \Ad_h$ is weakly contained in some $\pi \circ \Ad_g \circ \alpha$. Hence $\pi'$ is weakly contained in $\pi \circ \Ad_{g'} \circ \alpha$, for $g' = g\alpha(h)^{-1}$.
\end{proof}

\subsection{Many representations of virtual free groups}

We record here some obvious facts about the abundance of representations of free groups. 

\begin{lem}
There exists $n \geq 2$ such that for every $k \geq n$, the free group on $k$ generators admits uncountably many factorial representations $\pi_i$, $i \in I$, of hyperfinite type II$_1$ such that $\pi_i$ is not weakly contained in $\pi_j \circ \alpha$ for any distinct indices $i, j \in I$ and any $\alpha \in \Aut(F_n)$.
\end{lem}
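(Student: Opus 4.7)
The plan is to establish the lemma with $n = 2$ by exhibiting an uncountable family of representations whose ambient C*-algebras are pairwise non-isomorphic \emph{simple} C*-algebras, so that weak containment forces a $*$-isomorphism. Concretely, for each irrational $\theta \in (0,1)$, let $A_\theta = C^*(u_\theta, v_\theta \mid u_\theta v_\theta = e^{2\pi i \theta} v_\theta u_\theta)$ be the irrational rotation algebra; it is simple with a unique tracial state $\tau_\theta$, whose GNS completion is the hyperfinite II$_1$ factor $R$ (via the standard identification $A_\theta \cong C(\bS^1) \rtimes_\theta \Z$ with Lebesgue measure on $\bS^1$). Fix $k \geq 2$, denote the standard generators of $F_k$ by $g_1, \dots, g_k$, and define $\pi_\theta \colon F_k \to \cU(R)$ by $\pi_\theta(g_1) = u_\theta$, $\pi_\theta(g_2) = v_\theta$, and $\pi_\theta(g_i) = 1$ for $i \geq 3$. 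Then $C^*(\pi_\theta(F_k)) = A_\theta$ inside $R$ and $\pi_\theta(F_k)'' = R$, so $\pi_\theta$ is a factorial representation of hyperfinite type II$_1$.

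Now suppose $\pi_{\theta'} \prec \pi_\theta \circ \alpha$ for some $\alpha \in \Aut(F_k)$ and some irrationals $\theta, \theta'$. Since $\alpha$ is a bijection of $F_k$, the image $\pi_\theta \circ \alpha(F_k)$ equals $\pi_\theta(F_k)$ \emph{as a subset} of $R$, so $C^*(\pi_\theta \circ \alpha(F_k)) = A_\theta$. Unpacking the definition of weak containment at the C*-level yields a surjective $*$-homomorphism $A_\theta \twoheadrightarrow A_{\theta'}$ sending $\pi_\theta(\alpha(g_i)) \mapsto \pi_{\theta'}(g_i)$. Simplicity of $A_\theta$, together with the nonvanishing of the image, forces this map to be an isomorphism, so $A_\theta \cong A_{\theta'}$. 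By the classical classification of irrational rotation algebras (via the range of the trace on $K_0$, Pimsner--Voiculescu), this forces $\theta' \equiv \pm \theta \pmod \Z$. Since distinct irrationals in $(0, 1/2)$ lie in distinct orbits under $\theta \mapsto \pm \theta \pmod \Z$, taking $I$ to be any uncountable subset of irrationals in $(0, 1/2)$ produces the desired family $\{\pi_\theta : \theta \in I\}$.

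The subtle step is the observation that pre-composition by $\alpha \in \Aut(F_k)$ does not alter the set-theoretic image $\pi_\theta(F_k) \subset R$, and hence does not alter the generated C*-algebra; this is what reduces the $\Aut(F_k)$-invariant statement to a plain isomorphism question about the $A_\theta$. Beyond that, the argument relies only on the two classical facts that the GNS completion of $(A_\theta, \tau_\theta)$ is the hyperfinite II$_1$ factor and that $A_\theta \cong A_{\theta'}$ iff $\theta \equiv \pm\theta' \pmod \Z$, neither of which is the real content of this paper. Nothing in the argument uses $k > 2$, so the same construction works uniformly for all $k \geq 2$.
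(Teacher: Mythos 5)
Your proof is correct, and it takes a genuinely different route from the paper. The paper obtains the family from uncountably many pairwise non-isomorphic finitely generated infinite simple \emph{amenable} groups (Juschenko--Monod): for $n$ large enough, $F_n$ surjects onto uncountably many such groups $\Lambda_i$, the left regular representations of the $\Lambda_i$ pull back to factorial hyperfinite II$_1$ representations of $F_n$ (ICC because simple infinite, hyperfinite because amenable), and weak containment $\pi_i \prec \pi_j\circ\alpha$ is ruled out by a kernel argument: it forces $p_i$ to factor through $p_j\circ\alpha$, and simplicity of $\Lambda_j$ upgrades the factored map to an isomorphism $\Lambda_j\cong\Lambda_i$. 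You instead realize the representations inside the hyperfinite II$_1$ factor via the irrational rotation algebras $A_\theta$ and distinguish them by the C*-isomorphism class of the generated C*-algebra: your observation that precomposing with $\alpha\in\Aut(F_k)$ does not change the set $\pi_\theta(F_k)$, hence not the generated C*-algebra, cleanly disposes of the automorphism twist, and then simplicity of $A_\theta$ upgrades the weak-containment surjection to an isomorphism $A_\theta\cong A_{\theta'}$, which by Pimsner--Voiculescu/Rieffel forces $\theta'\equiv\pm\theta \pmod{\Z}$. Structurally the two arguments are parallel (both use simplicity of some object to convert weak containment into an isomorphism and then appeal to an uncountable classification), but the inputs differ: you trade the Juschenko--Monod theorem for the K-theoretic classification of rotation algebras, and in exchange you get the explicit and optimal value $n=2$, whereas the paper's $n$ is an unspecified large integer. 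Both versions supply everything the paper needs downstream (amenability of the representations, i.e.\ non-containment in the regular representation, follows in either case from the image generating the hyperfinite II$_1$ factor).
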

\begin{proof}
There are uncountably many pairwise non-isomorphic finitely generated simple groups. By Juschenko-Monod's theorem \cite{JM13} we can even choose these groups to be all amenable. So we may find some $n$ large enough so that $F_n$ has uncountably many non-isomorphic simple quotients $\Lambda_i$, $i \in I$, which are amenable (and infinite). The regular representation of each $\Lambda_i$ yields by composition a unitary representation $\pi_i$ of $F_n$. Since $\Lambda_i$ is simple infinite, it is ICC, so this representation is factorial, and by amenability it generates the hyperfinite II$_1$-factor. 

Note that the kernel of $\pi_i$ is precisely the kernel of the quotient map $p_i : F_n \to \Lambda_i$. Given two indices $i \neq j$ and an automorphism $\alpha \in \Aut(F_n)$, if $\pi_{i} \prec \pi_{j} \circ \alpha$, then the quotient map $p_i : F_n \to \Lambda_i$ factors through $p_j \circ \alpha$. By simplicity of $\Lambda_j$, the factorized map $\Lambda_j \to \Lambda_i$ must be an isomorphism. This gives $i = j$. 

If $k \geq n$, then any representation of $F_n$ gives a representation of $F_k$ by composition with the natural surjection $F_k \to F_n$.
\end{proof}

\begin{lem}\label{many rep}
Consider a finitely generated group $\Gamma$ containing a non-abelian free group of finite index. Then $\Gamma$ admits uncountably many unitary representations $\pi_i$, $i \in I$ of hyperfinite type II$_1$, such that $\pi_i$ is not weakly contained in $\pi_j \circ \alpha$ for any distinct indices $i, j \in I$ and any $\alpha \in \Aut(\Gamma)$.
\end{lem}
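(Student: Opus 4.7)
The plan is to induce representations from a characteristic finite-index free subgroup of $\Gamma$ of sufficiently large rank, and then to use Lemma \ref{going up} to transport the non-weak-containment property from the free group up to $\Gamma$.

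First I would arrange for the free subgroup to be characteristic in $\Gamma$. Since $\Gamma$ is finitely generated, it has only finitely many subgroups of any given finite index, so the intersection $F_0$ of all subgroups of $\Gamma$ of index $[\Gamma:F]$ is a characteristic, finite-index subgroup; being contained in the free group $F$, it is itself free. To ensure that $\rank(F_0) \geq n$, where $n$ is the constant supplied by the previous lemma, I would then replace $F_0$ by a verbal subgroup such as $F_0^m[F_0,F_0]$ for $m$ sufficiently large: this is characteristic in $F_0$ (hence in $\Gamma$), and its rank grows as a function of $m$ via the Schreier rank formula.

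Applying the previous lemma to $F_0$ furnishes uncountably many factorial representations $\sigma_i \colon F_0 \to \cU(H_i)$ of hyperfinite type II$_1$, with $\sigma_i \not\prec \sigma_j \circ \beta$ for distinct $i,j \in I$ and any $\beta \in \Aut(F_0)$. I would then set $\pi_i := \Ind_{F_0}^\Gamma \sigma_i$. By Lemma \ref{going up}(1), $\pi_i$ is a finite direct sum of factorial representations of type II$_1$; hyperfiniteness of $\pi_i(\Gamma)''$ follows from its sitting as a finite von Neumann subalgebra of the hyperfinite II$_1$ algebra $B(\ell^2(\Gamma/F_0)) \ovt \sigma_i(F_0)''$, since hyperfiniteness passes to tracial von Neumann subalgebras.

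Finally, to verify the non-weak-containment property, suppose $\pi_i \prec \pi_j \circ \alpha$ for some distinct $i,j$ and some $\alpha \in \Aut(\Gamma)$. Characteristicness of $F_0$ gives $\alpha(F_0) = F_0$, so Lemma \ref{going up}(2) applies (taking $\pi_i$ as a subrepresentation of itself) and produces some $g \in \Gamma$ with $\sigma_i \prec \sigma_j \circ \Ad_g \circ \alpha|_{F_0}$. Since $F_0$ is normal in $\Gamma$ and $\alpha$ preserves $F_0$, the composition $\Ad_g \circ \alpha|_{F_0}$ lies in $\Aut(F_0)$, contradicting the choice of the $\sigma_i$. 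The only mildly delicate point is the opening reduction to a characteristic free subgroup of rank at least $n$, which is needed precisely so that arbitrary automorphisms of $\Gamma$ fall into the hypothesis of Lemma \ref{going up}(2); once that is arranged, the argument is a formal combination of the two cited results.
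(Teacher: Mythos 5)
Your proof is correct and follows essentially the same route as the paper's: pass to a characteristic finite-index free subgroup of sufficiently large rank, apply the preceding lemma there, and induce up via Lemma \ref{going up}. You merely supply details the paper leaves implicit (the verbal-subgroup trick for raising the rank while staying characteristic, and hyperfiniteness passing to tracial subalgebras), so the argument is the same.
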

\begin{proof}
Consider a free subgroup $F$ of finite index in $\Gamma$. Taking $F$ smaller if necessary, we can assume that $F = F_k$, for $k$ large enough so that the previous lemma holds true. We can also assume that $F$ is normal inside $\Gamma$. In fact, since $\Gamma$ is finitely generated, it admits only finitely many subgroups of a given finite index. So we may assume that in fact $F$ is characteristic in $\Gamma$, i.e. invariant under every automorphism of $\Gamma$.
Then the result follows from combining the above lemma with Lemma \ref{going up}.
\end{proof}

We observe that a representation of a group into the hyperfinite II$_1$-factor is amenable in the sense of Bekka. So if the group is non-amenable, it is not weakly contained in the regular representation.

\section{Proof of the main results}

\subsection{General results}

\begin{proof}[Proof of Proposition \ref{main prop}]
(1) Take a non-amenable a-normal subgroup $\Lambda < \Gamma$ and a representation $\pi_0$ of $\Lambda$.
Denote by $\pi = \Ind_\Lambda^\Gamma(\pi_0)$ the induced representation. 

Denote by $I := (\Gamma/\Lambda ) \setminus \{\Lambda\}$. Then $\Gamma/\Lambda = \{\Lambda\} \sqcup I$ is a $\Lambda$-invariant partition, so that $H$ is the direct sum of two $\Lambda$-invariant subspaces, $H_1 = \delta_\Lambda \ot H_0$ and $H_2 = \ell^2(I) \ot H_0$. Denote by $\pi_1$ and $\pi_2$ the two $\Lambda$ representations obtained this way. 

We see that $\pi_1$ is canonically isomorphic with $\pi_0$ while Lemma \ref{Mackey} describes $\pi_2$ as a direct sum of representations $\Ind_{\Sigma}^\Lambda(\sigma)$, where $\Sigma < \Gamma$ is of the form $\Lambda \cap g\Lambda g^{-1}$ for some $g \in \Gamma \setminus \Lambda$, and $\sigma$ is a representation of $\Sigma$. Since $\Lambda$ is a-normal in $\Gamma$, each such $\Sigma$ is amenable and thus $\sigma$ is weakly contained in the regular representation. After inducing to $\Lambda$ and taking the sirect sum, we find that $\pi_2$ is weakly contained in the regular representation of $\Lambda$.

By definition, we have an inclusion $\pi(\Gamma)'' \subset B(\ell^2(\Gamma/\Lambda)) \ovt \pi_0(\Lambda)''$, and our goal is to prove that this is an equality. By taking commutants, we need to prove that $\pi(\Gamma)' \subset 1 \ot \pi_0(\Lambda)'$. Denote by $p \in B(H)$ the orthogonal projection onto $H_1 = \delta_{\Lambda} \ot H_0$. As explained above, $p \in \pi(\Lambda)'$.

{\bf Claim.} $pT(1-p) = 0$ for every $T \in \pi(\Lambda)'$.

Otherwise, by classical von Neumann algebra theory, we could find a nonzero partial isometry $u \in \pi(\Lambda)'$ such that $uu^* \leq p$ and $u^*u \leq 1- p$. Then $u$ implements a conjugation between a subrepresentation of $\pi_1 \simeq \pi_0$ and a subrepresentation of $\pi_2$. By Lemma \ref{factorial}, this implies that $\pi_0$ is weakly contained in $\pi_2$, and further, in the regular representation. This is excluded by assumption.

Fix $T \in \pi(\Gamma)'$. In particular, $T$ commutes with $\pi(\Lambda)$ so the claim implies that $p$ commutes with $T$: there exists $T_0 \in B(H_0)$ such that $T(\delta_{\Lambda} \ot \xi) = \delta_{\Lambda} \ot (T_0\xi)$ for every $\xi\in H_0$. Further, we observe that $T_0 \in \pi_0(\Lambda)'$. Hence for every $g \in \Gamma$:
\begin{align*}
T(\delta_{g\Lambda} \ot \xi) & = T\pi(g)(\delta_{\Lambda} \ot (\pi_0)_{c(g,\Lambda)^{-1}}\xi)\\
& = \pi(g)(\delta_{\Lambda} \ot T_0(\pi_0)_{c(g,\Lambda)^{-1}}\xi)\\
& = \pi(g) (\delta_\Lambda \ot (\pi_0)_{c(g,\Lambda)^{-1}}T_0\xi)\\
& = \delta_{g\Lambda} \ot T_0\xi.
\end{align*}
Therefore, $T = \id \ot T_0 \in 1 \ot \pi_0(\Lambda)'$, as desired.

For the moreover part, observe that $\pi_1$ is not weakly contained in $\pi_2$, as $\Lambda$-representations. Hence, there exists $a$ in the universal C*-algebra $C^*(\Lambda)$ such that $\pi_1(a) \neq 0$ while $\pi_2(a) = 0$. This implies that $\pi(a) = \pi_1(a) + \pi_2(a) = \pi_1(a) = \pi(a)p$ and indeed, $p = p_0 \ot 1$ for some rank one projection $p_0$. 

(2) Assume that $\pi$ is weakly contained in $\pi'$. Then this is also true for the restriction to $\Lambda$ of these representations. In particular, $\pi_0$ is weakly contained in $\pi'|_{\Lambda}$ which is weakly contained in $\pi_0' \oplus \lambda_\Lambda$, as we observed in the proof of (1). The factorial case follows from Lemma \ref{cut}.
\end{proof}

\begin{proof}[Proof of Corollary \ref{cor1}]
Assume that $\Gamma$ contains a proper a-normal non-amenable subgroup $\Lambda$ which is virtually free. Note that $\Lambda$ is necessarily of infinite index inside $\Gamma$.

Lemma \ref{many rep} provides us with an uncountable family of unitary representations $\pi_i$, $i \in I$, of $\Lambda$ which are all amenable, factorial of type II$_1$, and none of which is weakly contained in any other.
In particular no such $\pi_i$ is weakly contained in the regular representation.

We may then induce these representations to $\Gamma$ and Proposition \ref{main prop} gives that the representations $\rho_i$, $i \in I$ that we get are all factorial of type II$_\infty$, traceable, and none of them is weakly contained in any other.
\end{proof}

Corollary \ref{cor1} raises the question whether virtual free groups themselves admit many traceable representations of type II$_\infty$. As expected this is the case, as follows from the next lemma.

\begin{lem}
Consider a group $\Gamma$ and a finite index normal subgroup $\Lambda < \Gamma$. Assume that $\Lambda$ admits a non-amenable a-normal proper subgroup. Then $\Gamma$ admits a non-amenable a-normal proper subgroup $\Gamma_0$ such that $\Gamma_0 \cap \Lambda$ has finite index inside $\Gamma_0$.
\end{lem}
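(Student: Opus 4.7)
The plan is to take $\Gamma_0 := N_\Gamma(\tilde\Lambda_0)$ for a suitable refinement $\tilde\Lambda_0 \subseteq \Lambda_0$. The key preliminary observation is that any a-normal subgroup $H$ of $\Lambda$ is self-normalizing: if $\lambda \in N_\Lambda(H) \setminus H$, then $H \cap \lambda H \lambda^{-1} = H$ would be non-amenable, contradicting a-normality. Since $\Lambda$ is normal in $\Gamma$, each $\Gamma$-conjugate of $\Lambda_0$ is still a-normal in $\Lambda$ (conjugation by $g \in \Gamma$ restricts to an automorphism of $\Lambda$), hence self-normalizing. It follows that $N_\Gamma(H) \cap \Lambda = H$, so $\Gamma_0 := N_\Gamma(H)$ is automatically a proper, non-amenable, finite extension of $H$ with $[\Gamma_0 : \Gamma_0 \cap \Lambda] \leq [\Gamma : \Lambda]$, giving all the structural properties except a-normality in $\Gamma$.

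For a-normality in $\Gamma$, a short computation shows that $\Gamma_0 \cap g\Gamma_0 g^{-1}$ for $g \notin \Gamma_0$ meets $\Lambda$ exactly in $H \cap gHg^{-1}$ and is a finite extension of it; so the problem reduces to showing $H \cap gHg^{-1}$ is amenable for $g \notin N_\Gamma(H)$. For $g \in \Lambda$ this is a-normality of $H$ in $\Lambda$. For $g \notin \Lambda$ the conjugate $gHg^{-1}$ is a different a-normal subgroup of $\Lambda$, and amenability of the intersection is not automatic. The strategy for handling this is to choose $H = \tilde\Lambda_0$ so that every $\Gamma$-conjugate of $\tilde\Lambda_0$ is $\Lambda$-conjugate to $\tilde\Lambda_0$, equivalently, so that the $\Lambda$-conjugacy class $[\tilde\Lambda_0]$ is a fixed point of the natural action of $\Gamma/\Lambda$ on $\Lambda$-conjugacy classes of subgroups of $\Lambda$. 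Then $g\tilde\Lambda_0 g^{-1} = \mu \tilde\Lambda_0 \mu^{-1}$ for some $\mu \in \Lambda$, necessarily with $\mu \notin \tilde\Lambda_0$ (else $g \in N_\Gamma(\tilde\Lambda_0)$), and a-normality in $\Lambda$ concludes.

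To produce such $\tilde\Lambda_0$, I would iteratively refine $\Lambda_0$: whenever some $g \in \Gamma \setminus \Lambda$ makes $\Lambda_0 \cap g\Lambda_0 g^{-1}$ non-amenable, replace $\Lambda_0$ by this intersection. A simple case analysis ($\lambda \notin H$ versus $\lambda \notin H'$) shows that the intersection of two a-normal subgroups $H, H' \subseteq \Lambda$ with non-amenable intersection is again a-normal in $\Lambda$, so the refinement stays a-normal while non-amenability is preserved by construction. The main obstacle is proving this iteration terminates at a non-amenable subgroup whose $\Lambda$-class is $\Gamma/\Lambda$-fixed: the heuristic is that $\Gamma/\Lambda$ is finite and each refinement should kill one bad coset, so the process should stabilize in at most $[\Gamma : \Lambda] - 1$ steps; but rigorously controlling the sequence of bad elements (so as not to slip into amenability, and to achieve genuine $\Gamma/\Lambda$-invariance rather than mere invariance of one element) is the delicate part of the argument.
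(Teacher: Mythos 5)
Your overall architecture is the same as the paper's: pass from $\Lambda_0$ to a refined non-amenable subgroup, take its normalizer in $\Gamma$ as $\Gamma_0$, use the fact that a-normal subgroups of $\Lambda$ are self-normalizing (and that $\Gamma$-conjugation preserves a-normality in the normal subgroup $\Lambda$) to see that $\Gamma_0\cap\Lambda$ is the refined subgroup itself and has finite index in $\Gamma_0$, and reduce a-normality of $\Gamma_0$ in $\Gamma$ to an amenability statement at the level of the refined subgroup. All of that is correct. But the step you flag as ``the delicate part'' is in fact the entire content of the lemma, and as written it is a genuine gap. The naive iteration ``replace $\Lambda_0$ by $\Lambda_0\cap g\Lambda_0g^{-1}$ whenever this is non-amenable'' has no visible terminating quantity: the new, smaller subgroup may acquire new bad elements $g'$, and nothing prevents this from continuing indefinitely (there is no descending chain condition here, and ``one bad coset killed per step'' is not justified because the bad element $g'$ for the refined subgroup may lie in a coset that was already handled, conjugated by an element of $\Lambda$ outside the new subgroup). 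Moreover your target invariant --- that \emph{every} $\Gamma$-conjugate of $\tilde\Lambda_0$ be $\Lambda$-conjugate to $\tilde\Lambda_0$ --- is stronger than what is needed and is not what any such construction can deliver in general: e.g.\ for $\Lambda=\langle a,b\rangle * \langle c,d\rangle$ with $\Lambda_0=\langle a,b\rangle$ and $\Gamma=\Lambda\rtimes\Z/2$ swapping the two free factors, no refinement is ever triggered, yet the $\Lambda$-class of $\Lambda_0$ is not $\Gamma$-fixed; the lemma still holds there because the relevant intersections are trivial. The correct dichotomy to aim for is: for every $g\in\Gamma$, either $g\tilde\Lambda_0 g^{-1}\cap\tilde\Lambda_0$ is amenable or $g$ normalizes $\tilde\Lambda_0$.

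The paper gets this dichotomy in one shot, with no iteration, by a maximality trick over the finite quotient $F=\Gamma/\Lambda$: choose a subset $I\subset F$ that is \emph{maximal} among those admitting lifts $g_i$ with $\bigcap_{i\in I}g_i\Lambda_0g_i^{-1}$ non-amenable, and set $\Lambda_1$ equal to that intersection. Finiteness of $F$ makes existence of a maximal $I$ trivial, so termination is free. If $g\Lambda_1g^{-1}\cap\Lambda_1$ is non-amenable, then adjoining the lifts $gg_i$ still gives a non-amenable intersection, so maximality forces $p(g)I=I$; writing $gg_i=g_j\mu$ with $\mu\in\Lambda$ and using a-normality of $\Lambda_0$ in $\Lambda$ (the intersection $\Lambda_0\cap\mu\Lambda_0\mu^{-1}$ is non-amenable, hence $\mu\in\Lambda_0$) one gets $(gg_i)\Lambda_0(gg_i)^{-1}=g_j\Lambda_0g_j^{-1}$, so $g$ permutes the conjugates cut down to form $\Lambda_1$ and therefore normalizes $\Lambda_1$. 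From there the paper concludes exactly as you do with $\Gamma_0=N_\Gamma(\Lambda_1)$. So the fix for your write-up is to replace the open-ended iteration by this single maximal choice; the rest of your argument then goes through.
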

\begin{proof}
Take a non-amenable a-normal proper subgroup $\Lambda_0 < \Lambda$.
Consider the quotient group $F = \Gamma/\Lambda$, with projection map $p: \Gamma \to F$, and take a maximal subset $I \subset F$ such that there exist lifts $g_i \in \Gamma$, $i \in I$, for which $p(g_i) = i$ for every $i \in I$ and $\bigcap_{i \in I} g_i\Lambda_0g_i^{-1}$ is non amenable. Denote by $\Lambda_1$ this non-amenable subgroup.

{\bf Claim.} For every $g \in \Gamma$, either $g \Lambda_1 g^{-1} \cap \Lambda_1$ is amenable or $g$ normalizes $\Lambda_1$.

Assume that $g \Lambda_1 g^{-1} \cap \Lambda_1$ is non-amenable. Then by maximality of $I$, we must have that $p(g)I = I$. Then for every index $i \in I$, we find $j \in I$ such that $p(g)i = j$. This means that $gg_i\Lambda_0 = g_j\Lambda_0$, and hence $gg_i\Lambda_0g_i^{-1}g^{-1} = g_j\Lambda_0 g_j^{-1}$. Applying this observation for every $i \in I$ and intersecting over $I$ we find that indeed $g$ normalizes $\Lambda_1$, as claimed.

Note that $\Lambda_1$ is a-normal inside $\Lambda$, being an intersection of a-normal subgroups of $\Lambda$. So it is equal to its own normalizer inside $\Lambda$. Furthermore, since $\Lambda$ has finite index inside $\Gamma$, the normalizer $N_\Lambda(\Lambda_1)$ of $\Lambda_1$ inside $\Lambda$ has finite index in the normalizer $N_\Gamma(\Lambda_1)$ inside $\Gamma$. So we conclude that $\Lambda_1 = N_\Lambda(\Lambda_1)$ has finite index inside $\Lambda_2 := N_\Gamma(\Lambda_1)$.

Let us check that $\Lambda_2$ is a-normal in $\Gamma$. Take $g \in \Gamma$ such that $g\Lambda_2g^{-1} \cap \Lambda_2$ is non-amenable. Since $\Lambda_1$ has finite index inside $\Lambda_2$, we find that $g\Lambda_1g^{-1} \cap \Lambda_1$ is non-amenable as well. By the claim this implies that $g \in \Lambda_2$.
\end{proof}

\begin{cor}\label{virtual free cor}
If $\Gamma$ is virtually free, non-amenable, then it admits uncountably many factorial representations of type II$_\infty$ which are traceable, and none of which is weakly contained in any other.
\end{cor}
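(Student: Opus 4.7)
The strategy is to reduce the statement to Corollary \ref{cor1}: I only need to exhibit a proper, a-normal, non-amenable, virtually free subgroup of $\Gamma$. For this I will combine the preceding lemma with the existence of malnormal non-abelian free subgroups inside a non-abelian free group.

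Since $\Gamma$ is virtually free, intersecting the finitely many conjugates of a given finite-index free subgroup yields a finite-index normal subgroup $\Lambda \triangleleft \Gamma$ which is free (being a subgroup of a free group). Non-amenability of $\Gamma$ then forces $\Lambda$ to be non-abelian, so $\Lambda \cong F_k$ for some $k \geq 2$. The preceding lemma applied to the pair $(\Gamma, \Lambda)$ takes as input any proper, a-normal, non-amenable subgroup $\Lambda_0 < \Lambda$ and returns a proper, a-normal, non-amenable subgroup $\Gamma_0 < \Gamma$ such that $\Gamma_0 \cap \Lambda$ is of finite index in $\Gamma_0$. Since $\Gamma_0 \cap \Lambda$ is a subgroup of the free group $\Lambda$, it is itself free, so $\Gamma_0$ is virtually free. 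Corollary \ref{cor1} then yields the desired uncountable family of factorial traceable type II$_\infty$ representations of $\Gamma$.

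The only non-formal step is therefore the construction of $\Lambda_0$ inside $F_k$. In a non-abelian free group the amenable subgroups are precisely the cyclic ones, so a-normality of $\Lambda_0$ amounts to the requirement that $\Lambda_0 \cap g\Lambda_0 g^{-1}$ be cyclic for every $g \notin \Lambda_0$; in particular, any proper malnormal non-abelian free subgroup of $F_k$ does the job. Such subgroups are classical: for example, the Schottky-type subgroup $\langle a^n, b^n\rangle \subset \langle a,b\rangle \subset F_k$ is free of rank two and malnormal for $n$ sufficiently large, by a standard ping-pong argument on the Cayley tree of $F_2$, and Stallings foldings produce many further examples. This construction is the only non-bookkeeping ingredient in the proof; everything else is a direct application of the results already established.
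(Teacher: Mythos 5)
Your proposal is correct and follows exactly the route the paper intends (the paper leaves this corollary's proof implicit, deriving it from the preceding lemma together with Corollary \ref{cor1} in just the way you describe). One slip: $\langle a^n, b^n\rangle$ is \emph{not} malnormal in $\langle a,b\rangle$, since conjugating by $a$ gives an intersection containing $\langle a^n\rangle$; however, a Stallings fiber-product computation shows that all such intersections with conjugates by elements outside the subgroup are cyclic, hence amenable, so this subgroup \emph{is} a-normal, which is all your argument actually uses. (Alternatively, for $k\geq 3$ the free factor $\langle a,b\rangle < F_k$ is genuinely malnormal and proper, so only the case $k=2$ needs the corrected example.)
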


\subsection{The case of $\SL_n(\Z)$}

The case $n =2$ is a special case of Corollary \ref{virtual free cor}.

Fix $n \geq 3$ and denote by $\Gamma := \SL_n(\Z)$. Denote by $d$ the integer part of $n/2$, so that $n = 2d$ or $n = 2d+1$. Denote by $\Sigma < \Gamma$ the copy of $\SL_2(\Z)^d$ given by block diagonal matrices $\diag(A_1,\dots,A_d,1_{\odd})$, where $A_1,\dots, A_d \in \SL_2(\Z)$ and $1_{\odd}$ is the empty matrix if $n$ is even and equals the $1\times 1$-matrix with entry $1$ if $n$ is odd.

Denote by $e_1,\dots,e_n$ the canonical basis in $V = \R^n$. For the natural action of $\Gamma$ on $V$, $\Sigma$ fixes the planes $V_k := \spa(\{e_{2k}, e_{2k+1}\})$, for $k = 1, \dots, d$. It also fixes the space $V_\odd$, defined to be $\R e_n$ if $n$ is odd and $0$ otherwise. For every $k = 1,\dots,d$, denote by $\Sigma_k < \Sigma$ the set of elements which preserve $V_k$. Then $\Sigma_k$ is a copy of $\SL_2(\Z)$ and $\Sigma = \Sigma_1 \times \dots \times \Sigma_d$.

Although $\Sigma$ is not a-normal in $\Gamma$, the family of subgroups $\{\Sigma_i, , \, i = 1, \dots, d\}$ satisfies a property of this kind (up to a finite index normalizer). The next lemma specifies this property. The task will be to extend Proposition \ref{main prop} to this setting.

\begin{lem}\label{a-normal family}
The following facts are true :
\begin{enumerate}
\item $\Sigma$ has finite index in its normalizer $\Lambda := N_\Gamma(\Sigma)$;
\item $\Lambda$ coincides with the set of elements in $\Gamma$ which globally preserve the direct sum decomposition $V = V_1 \oplus \dots \oplus V_d \oplus V_\odd$. In other words it is the set of elements which permute the spaces $V_k$, $k = 1,\dots, d$.
\item For every $g \in \Gamma \setminus \Lambda$, there exists $1 \leq k \leq d$ such that $\Sigma_k \cap g\Sigma g^{-1}$ is amenable.
\item For every $g \in \Gamma \setminus \Lambda$, there exists $1 \leq k \leq d$ such that $\Sigma_k \cap g\Lambda g^{-1}$ is amenable.
\end{enumerate}
\end{lem}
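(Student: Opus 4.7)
The strategy is to exploit the characterization $\Sigma_k = \{\sigma \in \Sigma : \sigma|_{V_k^\perp} = I\}$, where we write $V_k^\perp := \bigoplus_{j \neq k} V_j \oplus V_\odd$, and to translate statements about $\Lambda$ and the intersections $\Sigma_k \cap g\Sigma g^{-1}$ into geometric information about how $g$ moves the subspaces $V_k^\perp$ and $V_k$. The key computational input is that, inside $\Sigma = \prod_j \Sigma_j$, the pointwise stabilizer of any subspace $W \subseteq V$ is $\prod_j \Stab_{\Sigma_j}(W_j)$, where $W_j$ is the projection of $W$ onto $V_j$; each factor is $\{I\}$ when $W_j = V_j$, a unipotent subgroup (hence amenable) when $\dim W_j = 1$, and all of $\SL_2(\Z)$ when $W_j = 0$. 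So this product is amenable exactly when every $W_j \neq 0$, and non-amenability forces some $W_{j_0} = 0$, which by dimension count means $W = V_{j_0}^\perp$.

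For (1) and (2), I would take $g \in \Lambda$ and use that $g\Sigma_k g^{-1}$ is a copy of $\SL_2(\Z)$ sitting inside $\Sigma$ and pointwise fixing $gV_k^\perp$. The workhorse observation applied with $W = gV_k^\perp$ yields $gV_k^\perp = V_{j_k}^\perp$ for a unique $j_k$ and $g\Sigma_k g^{-1} \subseteq \Sigma_{j_k}$; applying the same argument to $g^{-1}$ gives equality and shows that $k \mapsto j_k$ is a permutation. Since $V_{j_k}$ is the unique 2-dimensional complement of $V_{j_k}^\perp$ stabilized by all of $\Sigma_{j_k}$ (any graph-of-$\psi$ complement with $\psi \neq 0$ would restrict $\Sigma_{j_k}$ to a Borel subgroup), I conclude $gV_k = V_{j_k}$, and intersecting over $k$ gives $gV_\odd = V_\odd$. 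This is (2); (1) then follows since the subgroup of $\Lambda$ that does not permute the $V_j$'s is block-diagonal and hence a finite extension of $\Sigma$, while the induced permutation group on $\{V_j\}$ is finite.

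For (3), I argue contrapositively: assume $\Sigma_k \cap g\Sigma g^{-1}$ is non-amenable for every $k$, and deduce $g \in \Lambda$. Passing to the conjugate $H'_k := g^{-1}\Sigma_k g \cap \Sigma$, an element $\sigma \in H'_k$ is exactly one satisfying (a) $\sigma \in \Sigma$ fixes $W_k := g^{-1}(V_k^\perp)$ pointwise, and (b) $\sigma$ preserves $U_k := g^{-1}(V_k)$ setwise. Condition (a) alone places $H'_k$ inside the pointwise stabilizer described above; non-amenability forces $W_k = V_{j_k}^\perp$ for some $j_k$, whence $H'_k \subseteq \Sigma_{j_k}$. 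Writing $U_k$ as the graph of a linear map $\psi_k : V_{j_k} \to V_{j_k}^\perp$ (possible since $U_k$ is a 2-dimensional complement to $W_k = V_{j_k}^\perp$), condition (b) for $\sigma = \diag(I,\ldots,A,\ldots,I,1_\odd)$ with $A$ in the $j_k$-block becomes $(A - I)V_{j_k} \subseteq \ker \psi_k$; if $\psi_k \neq 0$ then $\ker \psi_k$ is a proper subspace of $V_{j_k}$ and $A$ is forced into the unipotent subgroup stabilizing this line, which is amenable, contradicting non-amenability of $H'_k$. Therefore $\psi_k = 0$, so $gV_{j_k} = V_k$; combined with $g^{-1}V_\odd = \bigcap_k g^{-1}V_k^\perp = \bigcap_k V_{j_k}^\perp = V_\odd$, this gives $g \in \Lambda$ via (2).

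The main obstacle lies in the odd case: condition (a) alone only pins down $gV_k^\perp$, while the complement $gV_k$ is free to tilt into the $V_\odd$ direction, and it is precisely the graph-of-$\psi$ argument using condition (b) that rules out such tilting. Finally, (4) is obtained from (3) by a finite-index argument: since $\Sigma$ has finite index in $\Lambda$, the subgroup $\Sigma_k \cap g\Sigma g^{-1}$ has finite index in $\Sigma_k \cap g\Lambda g^{-1}$, and amenability is preserved under finite index subgroups and extensions.
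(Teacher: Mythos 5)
Your proof is correct, but it is organized differently from the paper's. The paper proves (3) directly for $g$ outside the permutation group $\Lambda'$ of the planes: it picks $k$ with $V_k\neq gV_i$ for all $i$ and works on the left, using the projections $p,q$ of $V$ onto $V_k$ and its coordinate complement to extract a $(\Sigma_k\cap g\Sigma g^{-1})$-invariant line inside $V_k$ from the images $p(gV_i)$, $q(gV_i)$; statements (2) and (1) are then deduced from (3). You instead work on the right, inside $\Sigma$ itself: your ``workhorse'' observation that the pointwise stabilizer in $\Sigma=\prod_j\Sigma_j$ of a subspace $W$ is the product of the pointwise stabilizers of its $V_j$-components, amenable unless some component vanishes, is a clean structural lemma that the paper does not isolate. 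It lets you prove the hard inclusion of (2) directly (pinning down $gV_k^\perp$, then $gV_k$ by the graph-of-$\psi$ argument), and (3) contrapositively by the same two-step mechanism applied to $g^{-1}V_k^\perp$ and $g^{-1}V_k$. Both arguments bottom out in the same fact --- a subgroup of $\SL_2(\Z)$ stabilizing a line is amenable --- but your decomposition is genuinely different and arguably more systematic; the paper's is shorter and avoids the conjugation step. Your handling of (1), (4) and of $V_\odd$ matches the paper's.

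One small point: you never verify the reverse inclusion of (2), namely that every element permuting the $V_k$'s (and fixing $V_\odd$) actually normalizes $\Sigma$. This is not decorative --- you invoke it at the end of (3) to pass from ``$g$ permutes the $V_k$'s'' to ``$g\in\Lambda$''. It is immediate (such a $g$ carries $\Sigma_k=\{h\in\Gamma: h|_{V_k^\perp}=\mathrm{id},\ hV_k=V_k\}$ onto $\Sigma_{\tau(k)}$, using that $g$ maps $V_k\cap\Z^n$ onto $V_{\tau(k)}\cap\Z^n$), and the paper likewise dismisses it in one line, but you should state it.
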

\begin{proof}
(1) Note that $\Sigma$ has finite index inside the set of elements $g \in \Gamma$ such that $gV_i = V_i$ for all $i = 1,\dots,d$. So (2) is easily seen to imply (1).

(2) Denote by $\Lambda'$ the set of elements which permute the spaces $V_k$, $k = 1,\dots,d$. It is easy to see that elements of $\Lambda'$ normalize $\Sigma$. So $\Lambda' \subset \Lambda$. For the converse inclusion, it suffices to check that property (3) holds for every $g \in \Gamma \setminus \Lambda'$, since clearly the conclusion of (3) prevents $g$ to normalize $\Sigma$.

(3) Take $g \in \Gamma \setminus \Lambda'$. Then there exists $1 \leq k \leq d$ such that $V_k$ is not equal to some $gV_i$, $i = 1,\dots,d$. Let us prove that $\Sigma_0 := \Sigma_k \cap g\Sigma g^{-1}$ is amenable.

Denote by $W_k := \sum_{i \neq k} V_i$, and by $p$ the projection onto $V_k$ parallel to $W_k$ and $q = 1-p$ the projection onto $W_k$. These projections are $\Sigma_0$-equivariant.
Let us take some $1 \leq i \leq d$ such that $p(gV_i) \neq 0$. Since $g(V_i)$ is globally $\Sigma_0$-invariant, $p(gV_i)$ is $\Sigma_0$-invariant as well. If $p(gV_i)$ is one dimensional, then we have found a $\Sigma_0$-invariant line inside the plane $V_k$, proving that $\Sigma_0$ acts amenably on $V_k$. Since it acts trivially on $W_k$, this implies that $\Sigma_0$ is amenable.

Assume on the contrary that $p(gV_i) = V_k$. Then $p$ implements a conjugation between $gV_i$ and $V_k$. But by assumption, $gV_i \neq V_k$. So must also have $q(gV_i) \neq 0$. If $q|_{gV_i}$ is injective, then we find that $\Sigma_0$ acts trivially on $gV_i$. Otherwise the kernel of $q$ intersects $gV_i$ into a line, which is globally $\Sigma_0$-invariant. In both cases we find a $\Sigma_0$-invariant line in $gV_i$, hence in $V_k$, and we conclude again that $\Sigma_0$ is amenable. 

(4) follows obviously from (1) and (3).
\end{proof}

\begin{prop}\label{main prop 2}
Consider a type II$_1$ factorial representation $\sigma$ of $\Sigma$, whose restriction to each $\Sigma_k$, $k = 1,\dots,d$, is factorial and not weakly contained in the regular representation. The following facts hold true.
\begin{enumerate}
\item The induced representation $\rho = \Ind_\Sigma^\Gamma(\sigma)$ is a direct sum of finitely many factorial  representations of type II. At least one of them is tracial\footnote{In fact all of them, but we don't need this}.
\item Take $\sigma'$ another representation of $\Sigma$ and $\rho'$ denotes its induced $\Gamma$-representation. If a subrepresentation of $\rho$ is weakly contained in $\rho'$ then $\sigma \circ \Ad(g)$ is weakly contained in $\sigma'$, for some $g \in \Lambda$ (and $\Ad(g)$ denotes the automorphism of $\Sigma$ obtained by $g$ conjugation).
\end{enumerate}
\end{prop}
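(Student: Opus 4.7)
The plan is to mimic the proof of Proposition \ref{main prop}, using the finite-index inclusion $\Sigma \triangleleft \Lambda$ (Lemma \ref{a-normal family}(1)) together with the weakened ``a-normal-like'' property from Lemma \ref{a-normal family}(4). I start by applying Lemma \ref{going up}(1) to decompose $\tilde\sigma := \Ind_\Sigma^\Lambda(\sigma) = \tau_1 \oplus \cdots \oplus \tau_m$ into finitely many factorial representations $\tau_i$ of $\Lambda$ of type II$_1$. By transitivity of induction, $\rho = \bigoplus_i \mu_i$ with $\mu_i := \Ind_\Lambda^\Gamma(\tau_i)$, and it suffices to show each $\mu_i$ is a finite direct sum of type II factorial representations. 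A key auxiliary fact is that for every $i$ and every $k$, $\tau_i|_{\Sigma_k}$ is not weakly contained in $\lambda_{\Sigma_k}$: the decomposition $\tilde\sigma|_\Sigma = \bigoplus_{g\Sigma \in \Lambda/\Sigma} \sigma \circ \Ad_{g^{-1}}$ restricts on each $\Sigma_k$ to a direct sum of conjugates of $\sigma|_{\Sigma_{k'}}$ (using that $\Lambda$ permutes the $\Sigma_k$ by Lemma \ref{a-normal family}(2)), each of which is factorial and not weakly contained in the regular representation by hypothesis, and Lemma \ref{factorial} transfers this property to $\tau_i|_{\Sigma_k}$.

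For part (1), I adapt the argument of Proposition \ref{main prop}(1) to each $\mu_i$. With $H_{\mu_i} = H_1 \oplus H_2$ and $H_1 = \delta_\Lambda \otimes H_{\tau_i}$, the aim is to show $p_1 T p_2 = 0$ for every $T \in \mu_i(\Gamma)'$. A non-vanishing partial isometry would yield by Lemma \ref{factorial} a weak embedding $\tau_i \prec \mu_i|_\Lambda|_{H_2} = \bigoplus_{s \notin \Lambda} \beta_s$ with $\beta_s := \Ind_{\Lambda \cap s\Lambda s^{-1}}^\Lambda(\tau_{i,s}|_\cdot)$. Iterating Lemma \ref{cut} and using factoriality of $\tau_i$ reduces this to $\tau_i \prec \beta_s$ for some $s \in \Gamma \setminus \Lambda$. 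Restricting to $\Sigma_{k(s)}$ for the $k(s)$ from Lemma \ref{a-normal family}(4) and performing a second Mackey decomposition, one aims to show $\beta_s|_{\Sigma_{k(s)}} \prec \lambda_{\Sigma_{k(s)}}$, contradicting the auxiliary fact. The traciality assertion follows since $p_1 \mu_i(\Gamma)'' p_1 = \tau_i(\Lambda)''$ is a II$_1$ factor, making $\mu_i(\Gamma)''$ semifinite of type II. For part (2), a subrepresentation of $\rho$ weakly contained in $\rho'$ weakly contains some $\mu_i$ (Lemma \ref{factorial} applied to the decomposition from part (1)), so $\mu_i \prec \rho'$; restricting to $\Lambda$ gives $\tau_i \prec \rho'|_\Lambda = \tilde\sigma' \oplus \pi'_2$, and the $\pi'_2$ alternative is eliminated by the same contradiction argument. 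Hence $\tau_i \prec \tilde\sigma'$, and restricting to $\Sigma$ together with Lemma \ref{factorial} extracts $\sigma \circ \Ad_{h^{-1}} \prec \tilde\sigma'|_\Sigma = \bigoplus_{g\Sigma \in \Lambda/\Sigma} \sigma' \circ \Ad_{g^{-1}}$ for some $h \in \Lambda$, from which Lemma \ref{cut} and factoriality of $\sigma$ yield $\sigma \circ \Ad_{g'} \prec \sigma'$ for a suitable $g' \in \Lambda$ after rearranging the conjugating elements.

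The main technical obstacle is closing the second Mackey step in part (1): the sub-stabilizers $\Sigma_{k(s)} \cap ts\Lambda(ts)^{-1}$ (indexed by $t \in \Lambda$ in the second Mackey decomposition) are $t$-conjugate to $\Sigma_{\pi_{t^{-1}}(k(s))} \cap s\Lambda s^{-1}$, where $\pi_{t^{-1}}$ denotes the permutation of $\{1,\dots,d\}$ induced by $t^{-1}$. Since Lemma \ref{a-normal family}(4) only guarantees amenability at the specific index $k(s)$, not across the full $\Lambda/\Sigma$-orbit of $k(s)$, closing this step likely requires either a strengthening of the geometric input (extending Lemma \ref{a-normal family}(4) across the orbit by refining the argument on the planes $V_k$), or a more delicate combinatorial accounting showing that the potentially non-amenable sub-summands contribute only a finite-dimensional correction to $\mathcal{Z}(\mu_i(\Gamma)'')$, which is enough for the claimed decomposition into finitely many factorial type II representations.
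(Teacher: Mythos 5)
Your setup (induction by stages through $\Lambda = N_\Gamma(\Sigma)$, the splitting $H = H_1 \oplus H_2$ over $\{\Lambda\} \sqcup I$, and the auxiliary fact that the relevant restrictions to each $\Sigma_k$ are factorial and not weakly contained in $\lambda_{\Sigma_k}$) matches the paper, but the proof has a genuine gap at exactly the point you flag, and you do not close it. In fact the difficulty appears one step earlier than you locate it: your reduction ``$\tau_i \prec \bigoplus_{s \notin \Lambda} \beta_s$ implies $\tau_i \prec \beta_s$ for some $s$'' by ``iterating Lemma \ref{cut}'' is not valid, because the double coset space $\Lambda \backslash \Gamma / \Lambda$ is infinite and Lemma \ref{cut} only handles finitely many summands (a factorial representation can be weakly contained in an infinite direct sum without being weakly contained in any single summand). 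And even granting that reduction, as you observe, Lemma \ref{a-normal family}(4) pins down an amenable intersection only at one index $k(s)$ per coset, which is not stable under the second Mackey decomposition.

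The paper's resolution is to avoid the double-coset (Mackey) decomposition of $\pi_2|_\Lambda$ altogether and instead decompose the \emph{single} coset space: set $I_k := \{\, g\Lambda \in I : \Sigma_k \cap g\Lambda g^{-1} \text{ is amenable}\,\}$ for $k = 1,\dots,d$. Each $I_k$ is $\Sigma_k$-invariant, and Lemma \ref{a-normal family}(4) says precisely that $\bigcup_{k=1}^d I_k = I$ --- a \emph{finite} cover. Running the Arveson-extension/support-projection mechanism of Lemma \ref{cut} with the $d$ (overlapping) projections $p_k$ onto $\ell^2(I_k) \ot H_0$, one gets $\sum_k E(p_k) \geq 1$, hence some $E(p_k) \neq 0$, which produces a nonzero $\sigma'(\Sigma_k)$-invariant projection $r_k$ with $r_k\sigma'|_{\Sigma_k} \prec p_k\pi_2|_{\Sigma_k}$. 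For that fixed $k$, \emph{every} coset in $I_k$ has amenable $\Sigma_k$-stabilizer, so by Lemma \ref{Mackey} applied to $\Sigma_k$ acting on $\ell^2(I_k)\ot H_0$ one gets $p_k\pi_2|_{\Sigma_k} \prec \lambda_{\Sigma_k}$ outright; combined with factoriality of $\sigma'|_{\Sigma_k}$ and Lemma \ref{factorial} this gives the contradiction. No strengthening of Lemma \ref{a-normal family} and no ``combinatorial accounting'' of non-amenable sub-summands is needed: the trick is to fix the index $k$ first (finitely many choices, handled by the ucp argument) and only then invoke amenability coset by coset. Your part (2) inherits the same gap, since it quotes the unproved claim; once the claim is established as above, your remaining steps for (2) are essentially those of the paper.
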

\begin{proof}
(1) In fact we we can give a more precise statement using the induction by stages principle. Denote by $\pi := \Ind_\Sigma^{\Lambda}(\sigma)$, so that $\rho$ is conjugate to the induced representation of $\pi$. Then by Lemma \ref{going up}, we know that $\pi$ is a direct sum of finitely many factorial representations of type II$_1$. We will prove that $\rho(\Gamma)'' = B(\ell^2(\Gamma/\Lambda)) \ovt \pi(\Lambda)''$ and that $C^*(\rho(\Gamma))$ contains a non-zero element $x$ such that $(p_{\Lambda} \ot 1)x = x$, proving the traceability property.

Denote by $H_0$ the Hilbert space on which $\pi$ acts. Denote by $I := (\Gamma/\Lambda) \setminus \Lambda$. Then $\ell^2(\Gamma/\Lambda) \ot H_0$ is the direct sum of $H_1 := \delta_\Lambda \ot H_0$ and $H_2 := \ell^2(I) \ot H_0$, both of which are $\Lambda$-invariant. Denote by $\pi_1$ and $\pi_2$ the representations of $\Lambda$ obtained this way.

The result will follow exactly as in Proposition \ref{main prop} once we prove that no sub-representation of $\pi_1$ is weakly contained in $\pi_2$. Let us restrict further the discussion to $\Sigma$.

The restriction $\pi_1|_\Sigma$ is the direct sum of finitely many representations of the form $\sigma \circ \Ad(g)$ for elements $g \in \Lambda$. Indeed this follows by the proof of Lemma \ref{going up}. In particular each such representation is factorial. Note that $\Ad(g)$ permutes the groups $\Sigma_k$, $k = 1,\dots,d$. It follows that the restriction of these representations $\sigma \circ \Ad(g)$ to $\Sigma_k$ is factorial and not weakly contained in the regular representation for every $k = 1,\dots,d$.

{\bf Claim.} No subrepresentation of $\pi_1$ is weakly contained in $\pi_2$.

Assume the contrary. Then by restricting to $\Sigma$, we find that some $\Sigma$-subrepresentation of $\pi_1|_\Sigma$ is weakly contained in $\pi_2|_\Sigma$. By Lemma \ref{factorial}, we find that $\sigma \circ \Ad(g)$ is weakly contained in $\pi_2|_\Sigma$ for some $g \in \Lambda$. For simplicity denote by $\sigma' := \sigma \circ \Ad(g)$ and $H'$ the Hilbert space on which it acts. 

Denote by $\Phi: C^*(\pi_2(\Sigma)) \to C^*(\sigma'(\Sigma))$ the C*-morphism such that $\Phi(\pi_2(h)) = \pi'(h)$, for all $h \in \Sigma$. Use Arveson extension theorem to extend $\Phi$ to a ucp map $E: B(\ell^2(I) \ot H_0) \to B(H')$. 
For every $k$, denote by $I_k \subset I$ the set of cosets $g\Lambda \in I$ such that $\Sigma_k \cap g\Lambda g^{-1}$ is amenable. By Lemma \ref{a-normal family}, we have $\bigcup_{k=1}^d I_k = I$. 
Denote by $p_k \in B(\ell^2(I) \ot H_0)$ the orthogonal projection onto $\ell^2(I_k) \ot H_0$. Since $I_k$ is $\Sigma_k$-invariant, $p_k$ commutes with $\pi_2(\Sigma_k)$. Denote by $r_k \in B(H')$ the support projection of $E(p_k)$. 

Since $\bigcup_{k = 1}^d I_k = I$, we get $\sum_k p_k \geq 1$, and thus we may find $k$ such that $E(p_k) \neq 0$. In this case $r_k$ is a non-zero projection invariant under $\sigma'(\Sigma_k)$. Moreover, the map $E$ restricted to $p_kB(\ell^2(I) \ot H_0)p_k$ witnesses that $p_k\pi_2|_{\Sigma_k}$ weakly contains $r_k\sigma'|_{\Sigma_k}$. Since $\sigma'|_{\Sigma_k}$ is factorial, Lemma \ref{factorial} implies that $r_k\sigma'|_{\Sigma_k}$ is weakly equivalent to $\sigma'|_{\Sigma_k}$. Moreover, our choice of $I_k$ and $p_k$ and Lemma \ref{Mackey} imply that $p_k\pi_2|_{\Sigma_k}$ is weakly contained in the regular representation of $\Sigma_k$. So we arrive at the conclusion that $\sigma'|_{\Sigma_k}$ is weakly contained in the regular representation. 
But we observed that this was impossible. This contradiction finishes the proof of the claim, and the rest of (1) follows as in the proof of Proposition \ref{main prop}.

(2) Assume that a subrepresentation of $\rho$ is weakly contained in $\rho'$. By (1), we know that $\rho$ is the direct sum of finitely many factorial representations of the form $\Ind_\Lambda^\Gamma(\pi_0)$ for some factorial representation $\pi_0$ of $\Lambda$. In fact $\pi_0$ is a factorial subrepresentation of $\pi = \Ind_\Sigma^\Lambda(\sigma)$.

By Lemma \ref{factorial}, we deduce that one such factorial summand $\Ind_\Lambda^\Gamma(\pi_0)$ is weakly contained in $\rho'$. Restricting to $\Sigma$ we deduce that in particular $\pi_0|_\Sigma$ is weakly contained in $\rho'|_\Sigma$. But $\pi_0|_\Sigma$ is a subrepresentation of $\pi|_\Sigma$, which is the direct sum of finitely many factorial representations of the form $\sigma \circ \Ad(g)$. So applying again Lemma \ref{factorial}, we find that some $\sigma \circ \Ad(g)$ is weakly contained in $\rho'|_\Sigma$.

Now we apply the analysis made in (1) to $\rho'|_\Sigma$. It is the direct sum of finitely many representations $\sigma' \circ \Ad(h)$, for some elements $h \in \Lambda$, together with one representation $\sigma_2$, which is the restriction to $\Sigma$ of the representation on $\ell^2(I) \ot H_0'$. The proof of the claim above shows that $\sigma \circ \Ad(g)$ is not weakly contained in $\sigma_2$. By Lemma \ref{cut}, we then deduce that $\sigma \circ \Ad(g)$ is weakly contained in some $\sigma' \circ \Ad(h)$ for some $h \in \Lambda$, and we get the desired conclusion.
\end{proof}

\begin{proof}[Proof of Theorem \ref{cor2}]
By Lemma \ref{many rep}, we may find an uncountable family $\pi_i$, $i \in I$, of factorial representations of type II$_1$ of $\SL_2(\Z)$ which are all amenable and such that $\pi_i$ is not weakly contained in $\pi_j \circ \alpha$ for any $i \neq j$ and $\alpha \in \Aut(\SL_2(\Z))$. Since $I$ is uncountable, we may cut it in $d$ copies of itself, $I \simeq I^d$. So in fact, we may find uncountably many $d$-tuples of representations $(\pi_i^1, \dots,\pi_i^d)$, $i \in I$, such that $\pi_i^k$ is not weakly contained in $\pi_j^\ell$ for every $(i,k) \neq (j,\ell)$, and $\alpha \in \Aut(\SL_2(\Z))$.

Each such tuple gives a representation $\sigma_i$ of $\Sigma$, defined by
\[\sigma_i(g_1,\dots,g_d) = \pi_i^1(g_1) \ot \cdots \ot \pi_i^d(g_d), \text{ for every } (g_1,\dots, g_d) \in \Sigma.\]
Since the action of $\Lambda$ on $\Sigma$ permutes the factors $\Sigma_i$, we find that $\sigma_i$ is not weakly contained in $\sigma_j \circ \Ad(g)$ for every $i \neq j$ in $I$ and $g \in \Lambda$.

Moreover, each $\sigma_i$ is factorial since $\sigma_i(\Sigma)'' = \pi_i^1(\Sigma_1)'' \ovt \cdots \ovt \pi_i^d(\Sigma_d)''$. Likewise, its restriction to $\Sigma_k$, $k = 1,\dots, d$, is factorial and amenable (hence not weakly contained in the regular representation.

We may apply Proposition \ref{main prop 2} to find in the induced $\Gamma$-representation of each $\sigma_i$ a direct summand $\rho_i$ which is factorial and traceable. The family $\rho_i$ is uncountable and and none of them is weakly contained in any other.
\end{proof}


\bibliographystyle{plain}

\begin{thebibliography}{AGV12}
%
%
%
%
%
%
\bibitem[BBHP20]{BBHP20} {\sc U. Bader, R. Boutonnet, C. Houdayer, J. Peterson}, {\it Charmenability of Arithmetic groups of product type.} Preprint 2020, arXiv:2009.09952.
%
%
%
%
%
%
%
\bibitem[Be06]{Be06} {\sc B. Bekka}, {\it Operator-algebraic superridigity for $\SL_n(\Z)$, $n \geq 3$.} Invent. Math. {\bf 169} (2007), 401--425.
%
\bibitem[Be18]{Be18} {\sc B. Bekka}, {\it Infinite characters on $\GL_n(\Q)$, on $\SL_n(\Z)$, and on groups acting on trees}, Preprint 2018, {\tt arXiv:1806.10110}.

%
%
%
\bibitem[BK19]{BK19} {\sc B. Bekka, M. Kalantar}, {\it Quasi-regular representations of discrete groups and associated $\rC^*$-algebras.} {\tt arXiv:1903.00202}
%
%
%
%
\bibitem[BH19]{BH19} {\sc R. Boutonnet, C. Houdayer}, {\it Stationary characters on lattices of semisimple Lie groups.} {\tt arXiv:1908.07812}
%
%
%
%
%
%
%
%
%
%
%
%
%
%
%
%
%
%
%
%
%
%
%
%
%
\bibitem[JM13]{JM13} {\sc K. Juschenko, N. Monod}, {\it Cantor systems, piecewise translations and simple amenable groups.} Ann. of Math. (2) {\bf 178} (2013), no. 2, 775--787. 
%
%
%
%
%
\bibitem[Mac52]{Mac52} {\sc G.W. Mackey}, {\it Induced representations of locally compact groups. I.} Ann.
of Math. (2) {\bf 55} (1952), 101--139.

%
%
%
%
\bibitem[Pe14]{Pe14} {\sc J. Peterson}, {\it Character rigidity for lattices in higher-rank groups.} Preprint 2014.
%
%
%
\bibitem[Ro89]{Ro89} {\sc J. Rosenberg.} {\it Un complément à un théorème de Kirillov sur les caractères
de $\GL(n)$ d’un corps infini discret.} C.R. Acad. Sci. Paris {\bf 309} (1989), Série I, 581--586.
%
%
%
%
%
%
%
%
%
%
%
%
%

\end{thebibliography}

\end{document}